\newcommand{\sect}[1]{\section{#1}\setcounter{equation}{0}}
\newcommand{\subsect}[1]{\subsection{#1}}
\font\mbn=msbm10 scaled \magstep1
\font\mbs=msbm7 scaled \magstep1
\font\mbss=msbm5 scaled \magstep1
\newcommand{\Di}      {\mathbb{D}}
\newcommand{\N}       { \mathbb{N}}
\newcommand{\Z}        {\mathbb{Z}  }  
\newcommand\Co           {{\mathbb C}}
\newtheorem{Th}{Theorem}[section]
\newtheorem{Lm}[Th]{Lemma}
\newtheorem{C}[Th]{Corollary}
\newtheorem{Prop}[Th]{Proposition}
\newtheorem{E}[Th]{Example}
\newtheorem*{Qu}{Question}
\newtheorem*{Th A}{Theorem A}
\newtheorem*{Th B}{Theorem B}
\begin{document}

\title[Projective Freeness of  Algebras of Bounded Holomorphic Functions ]{Projective Freeness of Algebras of Bounded Holomorphic Functions  on Infinitely Connected Domains}
\author{Alexander Brudnyi}
\address{Department of Mathematics and Statistics\newline
\hspace*{1em} University of Calgary\newline
\hspace*{1em} Calgary, Alberta, Canada\newline
\hspace*{1em} T2N 1N4}
\email{abrudnyi@ucalgary.ca}

\keywords{Maximal ideal space, corona problem, projective free ring, Hermite ring, covering dimension, \v{C}ech cohomology}
\subjclass[2010]{Primary 30H50. Secondary 30H05.}

\thanks{Research is supported in part by NSERC}

\begin{abstract}
The algebra $H^\infty(D)$ of bounded holomorphic functions on $D\subset\mathbb C$ is projective free for a wide class of infinitely connected domains. In particular, for such $D$ every rectangular left-invertible matrix with entries in $H^\infty(D)$ can be extended in this class of matrices to an invertible square matrix (the generalization of the corona theorem for $H^\infty(D)$). This follows from a new result on the structure of the maximal ideal space of $H^\infty(D)$ asserting that its covering dimension is $2$ and the second \v{C}ech cohomology group is trivial. 
\end{abstract}

\date{}

\maketitle
\sect{Formulation of the Main Results}
\subsect{} Let $H^\infty(X)$ be the Banach algebra of bounded holomorphic functions on a complex manifold $X$ equipped with pointwise multiplication and supremum norm and let
$\mathfrak M(H^\infty(X))$ be the {\em maximal ideal space} of $H^\infty(X)$, i.e., the set of nonzero homomorphisms $H^\infty(X)\rightarrow\Co$ endowed with the weak$^*$ topology of the dual space $H^\infty(X)^*$ (the {\em Gelfand topology}). It is  a compact Hausdorff space contained in the closed unit ball of $H^\infty(X)^*$. If $X$ is {\em Caratheodory hyperbolic} (i.e., $H^\infty(X)$ separates the points of $X$),  it  can be identified with an open subset of $\mathfrak M(H^\infty(X))$ formed by evaluation functionals at points of $X$. Then the {\em corona problem}  asks whether $X$ is dense in $\mathfrak M(H^\infty(X))$. The most famous corona problem for $H^\infty$ on the unit disk $\Di$ was posed by Kakutani in 1941 and solved positively by Carleson \cite{C} in 1962 (see the book \cite{DKSTW} and references therein for other significant results in this area).
The general corona problem for algebras of bounded holomorphic functions on plane domains is still open as is the problem in several variables for the ball and polydisk.

Denseness of $X$ in $\mathfrak M(H^\infty(X))$ can be equivalently reformulated as follows, see, e.g., \cite[Ch.\,V]{Ga}: 

For every family   $f_1,\dots, f_n\in H^\infty(X)$, $n\in\N$, satisfying the {\em corona condition}
\begin{equation}\label{eq1.2}
\sup_{x\in X}\max_{1\le i\le n}|f_i(x)|>0
\end{equation}
there exist $g_1,\dots, g_n\in H^\infty(X)$ such that 
\begin{equation}\label{eq1.3}
\sum_{i=1}^n\, f_i\!\cdot\! g_i=1.
\end{equation}

A more general problem on extension of matrices over $H^\infty(X)$ is as follows:\smallskip

Let $A$ be a $k\times n$ matrix, $k<n$, with entries in $H^\infty(X)$ such that the family of $k$-minors satisfies the corona condition. Is there a $n\times n$ matrix $B$ with entries in $H^\infty(X)$ and ${\rm det}(B)=1$ extending $A$ (i.e., containing $A$ as a submatrix)?

If this is true for all such matrices $A$ with $k,n\in\N$, then $H^\infty(X)$ is called {\em Hermite}. Equivalently, $H^\infty(X)$ is Hermite if every finitely generated {\em stably free} $H^\infty(X)$-module is free.

It was first shown by Tolokonnikov that $H^\infty(X)$ is Hermite for $X$ being a Caratheodory hyperbolic  Riemann surface of finite type \cite[Thm.\,3]{To1} or a plain domain obtained by deleting from $\mathbb D^*:=\Di\setminus\{0\}$ a hyperbolically-rare sequence of closed disks converging to $\{0\}$, \cite[Cor.]{To2}. Later in \cite{Br2}, \cite{Br4} the author proved the following generalizations of  \cite[Thm.\,3]{To1}. 

Let $N$ be a bordered Riemann surface. A connected Riemann surface $U$ is called an {\em $N$-domain} if there is a holomorphic embedding of $U$ in an unbranched covering of $N$ inducing monomorphism from $\pi_1(U)$ to the fundamental group of the covering. 

Let $\mathscr U=\{U_\alpha\}_{\alpha\in\Lambda}$ be a family of (not necessarily distinct) $N$-domains and $X_{\mathscr U}:=\sqcup_{\alpha\in\Lambda} U_\alpha$. Then the Banach algebra $H^\infty(X_{\mathscr U})$ is Hermite, see \cite[Thm.\,1.1]{Br2}. 

This result was used by the author to prove a similar result for $H^\infty$ on unbranched coverings of Riemann surfaces of finite type. Specifically, let $N$ be a Caratheodory hyperbolic Riemann surface of finite type. Let $\mathscr U=\{U_\alpha\}_{\alpha\in\Lambda}$ be a family of (not necessarily distinct) unbranched coverings of $N$  and $X_{\mathscr U}:=\sqcup_{\alpha\in\Lambda} U_\alpha$. Then
the Banach algebra $H^\infty(X_{\mathscr U})$ is Hermite, see \cite[Thm.\,1.2]{Br4}.\smallskip

The concept of a Hermite ring is a weaker notion than that of a projective free ring. A commutative ring with identity $R$ is said to
be {\em projective free} if every finitely generated projective
$R$-module is free (i.e., if $M$ is an $R$-module such that
$M \oplus N\cong R^{d}$ for an $R$-module $N$ and $d\in\Z_+$, then
$M\cong R^{k}$ for some $k\in\Z_+$).  Since every stably free module is projective, every projective free ring is Hermite.
In terms of matrices, $R$
is projective free iff every nontrivial square idempotent matrix over $R$ is similar
(by an invertible matrix over $R$) to a matrix of the form
$$
\textrm{diag}(I_m,0):=\left[ \begin{array}{cc} I_m & 0 \\ 0 & 0 
\end{array} \right],\quad m\in\N,
$$
where $I_m$ is the $m\times m$ identity matrix, see, e.g., \cite[Prop.\,2.6]{Co}.

If $A$ is a projective free complex commutative unital Banach algebra, then its maximal ideal space $\mathfrak M(A)$ is connected, and the \v{C}ech cohomology group $H^2(\mathfrak M(A),\Z)=0$, see, e.g., \cite[Sect.\,3]{Br5} for the  corresponding references.

In \cite[Thm.\,1.5]{BS} the authors proved that $H^\infty(U)$ is projective free for an $N$-domain $U$. The proof relies on the Lax--Halmos-type theorem \cite[Thm.\,1.7]{Br2}. In fact, using uniform estimates of that theorem and arguing as in the proof of \cite[Thm.\,1.5]{BS} one obtains a more general statement generalizing \cite[Thm\,1.1]{Br2}. Specifically, let $\mathscr U=\{U_\alpha\}_{\alpha\in\Lambda}$ be a family of (not necessarily distinct) $N$-domains and $X_{\mathscr U}:=\sqcup_{\alpha\in\Lambda} U_\alpha$. Let $F$ be a nontrivial matrix idempotent on $X_{\mathscr U}$ with entries in $H^\infty(X_{\mathscr U})$ such that $F(X_{\mathscr U})$ is connected. Then $F$ is similar (by an invertible matrix over $H^\infty(X_{\mathscr U})$) to a matrix $\textrm{diag}(I_m,0)$, $m\in\N$.
\subsect{} In the present paper, continuing this line of research, we prove projective freeness of $H^\infty$ on a new wide class of plain domains, the so-called $\mathscr B$-domains, introduced and studied by Behrens \cite{Be}. By definition, $U$ is a $\mathscr B$-domain if it is obtained from a domain $V\subset\Co$ by
deleting a (possibly finite) {\em hyperbolically-rare} sequence of closed disks $\{\Delta_n\}\subset V$ with centers $\alpha_n$, i.e., such that there are disjoint closed disks $D_n$ with centers $\alpha_n$ satisfying $\Delta_n\Subset D_n\subset V$ and 
\begin{equation}\label{equ1.1}
\sum\, \frac{{\rm rad}(\Delta_n)}{{\rm rad}(D_n)}<\infty.
\end{equation}
Behrens \cite[Thm.\,6.1]{Be} proved that if a $\mathscr B$-domain $U$ is constructed from a domain $V$ for which the corona theorem is valid (i.e., $V$ is dense in $\mathfrak M(H^\infty(V))$), then it is valid for  $U$ as well. In this case, he also described the topological structure of $\mathfrak M(H^\infty(U))$. 

The main result of our paper reveals some additional topological properties of $\mathfrak M(H^\infty(U))$. For its formulation, recall that for a normal space $X$, ${\rm dim}\,X \le n$ if every open cover of $X$ can be refined by an open cover whose order $\le n + 1$. If  ${\rm dim}\,X \le n$ and the statement  ${\rm dim}\,X \le n-1$ is false, then ${\rm dim}\,X = n$.
\begin{Th}\label{te1.1}
Suppose $U$ is obtained from a domain $V\subset\mathbb C$ by deleting a (possibly finite) hyperbolically-rare sequence of closed disks such that
\begin{itemize}
\item[(i)] $V$ is dense in $\mathfrak M(H^\infty(V))$;\smallskip
\item[(ii)] The covering dimension ${\rm dim}\, \mathfrak M(H^\infty(V)) =2$;\smallskip
\item[(iii)] The \v{C}ech cohomology group $H^2(\mathfrak M(H^\infty(V)),\Z)=0$.\smallskip 
\end{itemize}
Then
${\rm dim}\, \mathfrak M(H^\infty(U)) =2$ and $H^2(\mathfrak M(H^\infty(U)),\Z)=0$ as well.
\end{Th}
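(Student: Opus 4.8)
The plan is to leverage Behrens' explicit description of $\mathfrak M(H^\infty(U))$ as a modification of $\mathfrak M(H^\infty(V))$ concentrated over the deleted disks, and then to analyze the covering dimension and the second cohomology separately by reducing everything to two types of building blocks: a global piece governed by $V$ and local pieces governed by single-hole annuli. First I would set up the restriction homomorphism $\rho\colon H^\infty(V)\to H^\infty(U)$, $f\mapsto f|_U$, and the induced continuous surjection $\pi\colon\mathfrak M(H^\infty(U))\to\mathfrak M(H^\infty(V))$. Using (i) and Behrens' analysis, $\pi$ is a homeomorphism over the part of $\mathfrak M(H^\infty(V))$ lying away from the centers $\alpha_n$, while over each $\alpha_n$ the fiber is the local maximal ideal space attached by the hole; near $\Delta_n$ the domain $U$ coincides with the annulus $A_n:=D_n\setminus\Delta_n$, so this fiber is identified with a closed subset of $\mathfrak M(H^\infty(A_n))$.

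For the lower bound $\dim\,\mathfrak M(H^\infty(U))\ge 2$, I would pick a closed round disk $\bar B\Subset U$; since $U$ embeds as an open subset of the compact Hausdorff space $\mathfrak M(H^\infty(U))$ (Carathéodory hyperbolicity of $U$), the disk $\bar B$ is compact, hence a closed subspace, and closed-subspace monotonicity of the covering dimension gives $\dim\,\mathfrak M(H^\infty(U))\ge\dim\bar B=2$. For the upper bound I would produce a countable closed cover $\mathfrak M(H^\infty(U))=\mathfrak M_{\mathrm{glob}}\cup\bigcup_n\mathfrak M_{\mathrm{loc},n}$, where $\mathfrak M_{\mathrm{glob}}$ is carried homeomorphically by $\pi$ into a closed subset of $\mathfrak M(H^\infty(V))$ (so $\dim\le 2$ by (ii) and closed monotonicity) and each $\mathfrak M_{\mathrm{loc},n}$ is a closed subset of $\mathfrak M(H^\infty(A_n))$ for a finitely connected $A_n$ (so $\dim\le 2$, the finitely connected analogue of Suárez's theorem for $\Di$). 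The countable sum theorem for covering dimension in normal spaces then yields $\dim\,\mathfrak M(H^\infty(U))\le 2$.

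For the cohomological statement I would argue by a Mayer--Vietoris and continuity-of-\v{C}ech-cohomology scheme. Writing $U_k:=V\setminus\bigcup_{n\le k}\Delta_n$ for the truncations, I would first prove $H^2(\mathfrak M(H^\infty(U_k)),\Z)=0$ by induction on $k$: passing from $U_{k-1}$ to $U_k$ amounts to excising one more annular neighborhood, and the Mayer--Vietoris sequence for the cover of $\mathfrak M(H^\infty(U_k))$ by the global part and the local annulus part reads $\cdots\to H^1(\mathcal I_k)\to H^2(\mathfrak M(H^\infty(U_k)))\to H^2(\mathfrak M(H^\infty(U_{k-1})))\oplus H^2(\mathfrak M(H^\infty(A_k)))\to\cdots$, where $\mathcal I_k$ is the overlap fiber over the boundary circle $\partial\Delta_k$. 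Using (iii) and the vanishing of $H^2$ for the annulus, the target is $0$, so $H^2(\mathfrak M(H^\infty(U_k)))$ is a quotient of the image of the connecting map; the point is to show this map vanishes, equivalently that $H^1$ of the global and local pieces surjects onto $H^1(\mathcal I_k)$. Finally I would pass to the limit: the hyperbolic rarefaction \eqref{equ1.1} is exactly what controls the accumulation of holes, relating $\mathfrak M(H^\infty(U))$ to the inverse limit of the $\mathfrak M(H^\infty(U_k))$ so that continuity of \v{C}ech cohomology gives $H^2(\mathfrak M(H^\infty(U)),\Z)=\varinjlim_k H^2(\mathfrak M(H^\infty(U_k)),\Z)=0$.

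The hardest part, and the place where condition \eqref{equ1.1} must be used in an essential way, is the passage from finitely many to infinitely many deleted disks: one must show both that the accumulation of the local pieces does not raise the covering dimension and that the connecting homomorphisms in the Mayer--Vietoris sequences vanish uniformly, so that no two-dimensional cohomology is generated in the limit. Controlling the overlaps $\mathcal I_k$ and the associated $H^1$-surjectivity --- i.e.\ checking that every $1$-cocycle on the boundary fiber of a hole extends over either the global or the local side --- is where the bulk of the technical work will lie, and where I expect Behrens' estimates together with the rarefaction hypothesis to be indispensable.
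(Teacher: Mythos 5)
There is a genuine gap, and it is concentrated exactly where you suspect the difficulty lies: the passage from finitely many to infinitely many holes. Your closed cover $\mathfrak M_{\mathrm{glob}}\cup\bigcup_n\mathfrak M_{\mathrm{loc},n}$ does not cover $\mathfrak M(H^\infty(U))$. In Behrens' description the map $\pi$ fails to be injective not only over the circles $\partial\Delta_n$ but, more seriously, over the set $S$ of accumulation points of the centers $\alpha_n$ inside $\mathfrak M(H^\infty(V))\setminus V$; the fiber $\pi^{-1}(S)$ is a large set that belongs to no single $\mathfrak M(H^\infty(A_n))$ and is not carried injectively into $\mathfrak M(H^\infty(V))$. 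The union $\bigcup_n\mathfrak M_{\mathrm{loc},n}$ is not closed (its closure picks up precisely $\pi^{-1}(S)$), so the countable sum theorem does not apply as stated. Likewise, the continuity argument $H^2(\mathfrak M(H^\infty(U)))=\varinjlim_k H^2(\mathfrak M(H^\infty(U_k)))$ fails because $\mathfrak M(H^\infty(U))$ is \emph{not} the inverse limit of the truncated spaces $\mathfrak M(H^\infty(U_k))$: a bounded holomorphic function on $U$ need not extend to any $U_k$, and the closure of $\bigcup_k H^\infty(U_k)|_U$ is a proper subalgebra of $H^\infty(U)$, so the extra corona structure over $S$ is invisible to the truncations. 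The hyperbolic rarity condition \eqref{equ1.1} gives Behrens' additive decomposition of $H^\infty(U)$ and hence the structure theorems, but it does not collapse $\pi^{-1}(S)$ onto anything seen at a finite stage.

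The missing ingredient, which is the paper's main new input, is an analysis of $\mathfrak M(H^\infty(\Di\times\N))$ (equivalently, $H^\infty$ of a disjoint union of countably many disks with uniform sup-norm): Behrens' theorem identifies $\pi^{-1}(S)\setminus i(S)$ with a piece of $\mathfrak M(H^\infty(\Di\times\N))\setminus(\Di\times\N)$, and one must prove that this space has covering dimension $2$ and vanishing $H^2$. The paper does this by covering $\mathfrak M(H^\infty(\Di\times\Z))$ by two compacta homeomorphic to subsets of $\mathfrak M(H^\infty(\Di))$ (realizing $\Di\times\Z$ as the preimage of a disk inside the universal cover of a figure-eight-like domain) and by splitting rank-one idempotents over each component with uniform Beurling--Lax--Halmos bounds. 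Your outline contains no analogue of this step. The remaining parts of your plan are essentially sound and parallel the paper: the lower bound via a closed disk in $U$, the handling of each $R^{-1}(\partial\Delta_n)$ via Su\'arez's theorem, and the final gluing of a trivialization over the exceptional set with one over the global part (the paper does this with explicit $\Co^*$-valued $1$-cocycles pushed down to $\mathfrak M(H^\infty(V))$ rather than a hole-by-hole Mayer--Vietoris induction, which avoids having to prove your $H^1$-surjectivity claim at every stage).
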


As a direct corollary of Theorem \ref{te1.1} we obtain the following.
\begin{C}\label{cor1.2}
Under conditions of the theorem the Banach algebras $H^\infty(V)$ and $H^\infty(U)$ are projective free.
\end{C}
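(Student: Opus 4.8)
The plan is to obtain the corollary from the standard topological characterization of projective freeness together with Theorem \ref{te1.1}. The characterization I would invoke, with references collected in \cite[Sect.\,3]{Br5}, is that a commutative unital complex Banach algebra $A$ is projective free as soon as its maximal ideal space $X:=\mathfrak M(A)$ is connected, has covering dimension ${\rm dim}\,X\le 2$, and satisfies $H^2(X,\Z)=0$. The mechanism is the correspondence between finitely generated projective $A$-modules and complex vector bundles over $X$: such a module is the range of an idempotent $e\in M_n(A)$, whose entrywise Gelfand transform defines a bundle $\xi_e$ over $X$, and for commutative unital Banach algebras $e$ is similar over $A$ to $\textrm{diag}(I_m,0)$ exactly when $\xi_e$ is trivial. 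Connectedness of $X$ fixes the rank $k$ of $\xi_e$; since ${\rm dim}\,X\le 2$, bundles of rank $k\ge 2$ split off trivial summands (the primary obstruction to a nowhere-vanishing section lies in $H^{2k}(X,\Z)=0$), reducing to a line bundle $L$; and line bundles are classified by $c_1(L)\in H^2(X,\Z)=0$. Hence $\xi_e$ is trivial and $e\sim\textrm{diag}(I_k,0)$, so $A$ is projective free.

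It remains to verify the three topological hypotheses for $A=H^\infty(V)$ and $A=H^\infty(U)$. Connectedness is the only point needing a small argument: $V$ and $U$ are domains in $\Co$, hence connected, so it suffices to know that each is dense in its own maximal ideal space, whereupon that space is the closure of a connected set and is connected. For $V$ this density is hypothesis (i); for $U$ it is Behrens' corona theorem \cite[Thm.\,6.1]{Be}, whose hypothesis --- validity of the corona theorem on $V$ --- is again (i). The dimension and cohomology conditions hold as well: for $V$ they are precisely (ii) and (iii), and for $U$ they are the two conclusions of Theorem \ref{te1.1}. Applying the criterion of the first paragraph to each algebra yields projective freeness of both $H^\infty(V)$ and $H^\infty(U)$.

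Because the corollary is \emph{direct}, I anticipate no real obstacle beyond assembling these pieces; all the analytic work is concentrated in Theorem \ref{te1.1}. The one cited ingredient I would flag as nontrivial is the faithfulness of the module--bundle correspondence --- that triviality of the topological bundle $\xi_e$ forces \emph{algebraic} similarity of the idempotent $e$ over $A$, not merely over $C(X)$ --- but this is precisely the part of the projective-free machinery recorded in the literature cited in the first paragraph, and I would rely on it rather than reprove it.
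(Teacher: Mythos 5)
Your argument is correct and is essentially the paper's own proof: the paper disposes of the corollary in one line by citing the sufficient condition of \cite[Cor.\,1.4]{BS} (connected maximal ideal space, covering dimension at most $2$, vanishing $H^2$ implies projective freeness), with connectedness coming from density of the connected domain in its maximal ideal space and the other two hypotheses supplied by (ii)--(iii) for $V$ and by Theorem \ref{te1.1} for $U$. You merely unpack the bundle-theoretic mechanism behind that cited criterion, which the paper leaves to the reference.
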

For instance, taking here $V:=\Di^*$ and $U$ constructed from $V$ by deleting a  hyperbolically-rare sequence of closed disks converging to $\{0\}$ we obtain the generalization of \cite[Cor.]{To2}
because $\mathfrak  M(H^\infty(\Di))$ satisfies the conditions of the theorem due to the classical work of Su\'arez \cite{S} (see also \cite{Br5} and references therein). Moreover, it is easily deduced from \cite{S} that $\mathfrak  M(H^\infty(V))$ satisfies the conditions of the theorem for $V$ being a Caratheodory hyperbolic Riemann surface of finite type. 

Up until now nothing was known about covering dimension and \v{C}ech cohomology groups of $\mathfrak  M(H^\infty(V))$ for infinite type Riemann surfaces $V$. 
Theorem \ref{te1.1} fills in this gap and provides many examples of domains satisfying conditions (i)--(iii). 

Another class of examples of a different nature of Riemann surfaces satisfying these conditions is given by the following result.
\begin{Th}\label{appen}
Let $C$ be an unbranched covering of a bordered Riemann surface. Then 
\begin{itemize}
\item[(a)] $C$ is dense in $\mathfrak M(H^\infty(C))$;\smallskip
\item[(b)] ${\rm dim}\,\mathfrak M(H^\infty(C))=2$;\smallskip
\item[(c)] $H^2(\mathfrak M(H^\infty(C)),\Z)=0$.
\end{itemize}
\end{Th}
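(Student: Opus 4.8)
The plan is to obtain parts (a) and (c) almost for free from the established theory of $N$-domains, and to concentrate the real work on the dimension estimate (b).

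First I would observe that $C$ is itself an $N$-domain: taking the covering in the definition to be $C$ and the embedding to be the identity, the induced map $\pi_1(C)\to\pi_1(C)$ is a monomorphism. Hence by \cite[Thm.\,1.5]{BS} the algebra $H^\infty(C)$ is projective free. Projective freeness immediately yields (c), since a projective free complex commutative unital Banach algebra $A$ has $H^2(\mathfrak M(A),\Z)=0$; and it yields (a), since projective free $\Rightarrow$ Hermite, and the Hermite property applied to a row $(f_1,\dots,f_n)$ whose $1$-minors satisfy the corona condition produces, by cofactor expansion of the extending matrix with determinant $1$, functions $g_i$ with $\sum_i f_ig_i=1$; this is exactly the corona theorem, equivalently the denseness of $C$ in $\mathfrak M(H^\infty(C))$.

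For (b) the lower bound is easy. Since $\pi_1(N)$ is finitely generated, the covering fibres are countable, so $C$ is a connected $\sigma$-compact complex $1$-manifold, and being Caratheodory hyperbolic it sits by (a) as an open dense $F_\sigma$ subset of $\mathfrak M(H^\infty(C))$; monotonicity of covering dimension on $F_\sigma$ subsets of a normal space (via the countable sum theorem) then gives ${\rm dim}\,\mathfrak M(H^\infty(C))\ge{\rm dim}\,C=2$. The upper bound ${\rm dim}\le 2$ is the heart of the matter, and here I would reduce everything to Su\'arez's theorem ${\rm dim}\,\mathfrak M(H^\infty(\Di))=2$ \cite{S} by exploiting the local biholomorphism furnished by the covering. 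Because the interior of a bordered Riemann surface is of finite topological type, I would choose finitely many open coordinate disks $D_1,\dots,D_r\cong\Di$ covering $N$ over each of which $p$ is trivial, so that $p^{-1}(D_j)=\bigsqcup_\alpha D_j^\alpha$ is a disjoint union of disks. As $C$ is dense and $\bigcup_j p^{-1}(D_j)=C$, the closures $K_j:=\overline{p^{-1}(D_j)}$ form a finite closed cover of $\mathfrak M(H^\infty(C))$, and by the finite sum theorem it suffices to prove ${\rm dim}\,K_j\le 2$ for each $j$.

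The main obstacle is to bound ${\rm dim}\,K_j$ by localization to the union-of-disks algebra. The restriction homomorphism $H^\infty(C)\to H^\infty(p^{-1}(D_j))$ induces a continuous surjection of $\mathfrak M(H^\infty(p^{-1}(D_j)))$ onto $K_j$, and the decisive technical point I must establish is that this map is a homeomorphism onto $K_j$; equivalently, that the globally defined bounded holomorphic functions on $C$ already separate the local characters lying over $D_j$. This is where one constructs enough global functions on the covering, using Blaschke-type products adapted to the hyperbolically separated sheets, and where conformal invariance of $H^\infty$ is used to identify each $D_j^\alpha$ with $\Di$. Granting this homeomorphism, ${\rm dim}\,K_j\le{\rm dim}\,\mathfrak M\bigl(H^\infty(\bigsqcup_\alpha\Di)\bigr)$, and the argument closes once one shows ${\rm dim}\,\mathfrak M\bigl(H^\infty(\bigsqcup_\alpha\Di)\bigr)=2$, which I would deduce from Su\'arez's theorem together with the description of the maximal ideal space of an $\ell^\infty$-sum as a space fibred over the zero-dimensional Stone--\v{C}ech compactification of the index set with fibres $\mathfrak M(H^\infty(\Di))$. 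Thus the entire difficulty is concentrated in the localization homeomorphism and the $\ell^\infty$-sum dimension computation, both of which import rather than reprove the deep disk estimate of Su\'arez.
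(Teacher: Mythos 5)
Your derivation of (c) from \cite[Thm.\,1.5]{BS} is correct and is exactly what the paper does. But (a) does not follow from projective freeness: the chain ``projective free $\Rightarrow$ Hermite'' yields only the module-theoretic Hermite property (every left-invertible row \emph{over the ring} completes to an invertible matrix), and to apply it to a row satisfying merely the corona condition \eqref{eq1.2} you would already need to know that such a row is unimodular over $H^\infty(C)$ --- which \emph{is} the corona theorem. The argument is circular; the corona theorem for coverings of bordered Riemann surfaces is a separate hard result, cited in the paper as \cite[Cor.\,1.6]{Br1}.

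For (b) the strategy of localizing to a disjoint-union-of-disks algebra and invoking Su\'arez is in the right direction, but two of your steps fail as stated. First, the ``decisive'' localization homeomorphism $\mathfrak M(H^\infty(p^{-1}(D_j)))\rightarrow K_j$ is false: at an interior point $z_0\in\partial D_j\cap N$ every $f\in H^\infty(C)$ is continuous, so the entire fibre of $\mathfrak M(H^\infty(p^{-1}(D_j)))$ sitting over a sheet of $p^{-1}(z_0)$ (a large set, a boundary fibre of $\mathfrak M(H^\infty(\Di))$) is collapsed to a single point of $\mathfrak M(H^\infty(C))$; no amount of global Blaschke-type functions can separate these characters, and without injectivity a continuous surjection gives no upper bound on $\dim K_j$. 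The paper circumvents this by splitting $\mathfrak M(H^\infty(S'))$ into the part over $S$ --- handled directly via the fibrewise Stone--\v{C}ech compactification $E(S_o,\beta F)$, which is $2$-dimensional --- and the part over the ideal boundary $\partial S$, where it uses boundary annuli $A_{ij}$ with $A_{ij}'\cong\Di\times\N$ and proves (Proposition \ref{prop4.2}, via a $\overline\partial$-equation and the extension theorem of \cite{Br3}, not Blaschke products) that $H^\infty(A_{ij}')$ extends continuously to $\hat r^{-1}(A_{ij}\cup\partial_o A_{ij})$ \emph{only}, i.e., injectivity is claimed only over $A_{ij}$ and the outer boundary arc, which suffices since the sets $\hat r^{-1}(\partial_o A_{ij})$, $j=1,2$, cover $\hat r^{-1}(\partial D_i)$. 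Second, your computation of $\dim\mathfrak M(H^\infty(\bigsqcup_\alpha\Di))$ rests on describing it as fibred over $\beta\N$ with fibres $\mathfrak M(H^\infty(\Di))$; this is not the actual structure (the fibres over $\beta\N\setminus\N$ are maximal ideal spaces of ultraproduct-type quotients, not of $H^\infty(\Di)$, and no sum theorem applies to such a decomposition). The paper needs a genuinely new device here (Theorem \ref{teo3.1}): $\Di\times\Z$ is realized as $r_u^{-1}(\Di)$ for the universal covering $r_u$ of $\Omega=(\Di\cup\Di_1(\frac32))\setminus\{\frac32\}$, and Su\'arez's extension theorem then covers $\mathfrak M(H^\infty(\Di\times\Z))$ by two compacta homeomorphic to subsets of $\mathfrak M(H^\infty(\Di))$. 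Both of these gaps are where the real content of the proof lies.
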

Thus the class of domains $V$ in Theorem \ref{te1.1} includes plain unbranched coverings of bordered Riemann surfaces and domains obtained from them by deleting compact subsets of the analytic capacity zero (e.g., totally disconnected compact subsets), see Example \ref{ex1.3}.
Starting from such a $V$ one can construct a descending chain of $\mathscr B$-domains $V:=V_0\supset V_1\supset V_2\supset\dots\supset V_n$, $n\in\N$, such that each $V_i$ is defined by deleting a hyperbolically-rare sequence of closed disks and then  a compact subset of the analytic capacity zero from $V_{i-1}$. Then all $V_i$ satisfy assumptions of Theorem \ref{te1.1} and, in particular, all algebras $H^\infty(V_i)$ are projective free (and so Hermite).
\begin{E}\label{ex1.3}
{\rm It is well known that every bordered Riemann surface $S$ is a domain in a compact Riemann surface $R$ such that $R\setminus S$ is the disjoint union of finitely many disks  with analytic boundaries. Each $S$ is the quotient of a plane domain $\Omega$ by the discrete action of a Schottky group $G$ (the free group with $g$ generators, where $g$ is the genus of $S$) by M\"{o}bius transformations, see, e.g., \cite{M}. The corresponding quotient map $r: \Omega\rightarrow S$ determines the regular covering of $S$ with the deck transformation group $G$. Then $V:=r^{-1}(R)\subset \Omega$ is a regular covering of $S$ satisfying conditions of Theorem \ref{te1.1}. By definition, $V$ is the complement in $\Omega$ of the finite disjoint union of $G$-orbits of compact simply connected domains with analytic boundaries biholomorphic by $r$ to the connected components of $R\setminus S$. 

Further, if we consider the universal covering $r_u: X\rightarrow S$ of $S$ (where $X=\Di$ if $g\ge 2$, $X=\Co$ if $g=1$ and $X=\Co\mathbb P$ if $g=0$), then  $V:=r_u^{-1}(R)\subset X$ satisfies conditions of Theorem \ref{te1.1} as well. Here $V$ is the complement in $X$ of the finite disjoint union of orbits under the action by M\"{o}bius transformations of the fundamental group $\pi_1(R)$ of $R$ of compact simply connected domains with analytic boundaries biholomorphic by $r_u$ to the connected components of $R\setminus S$. 

Using such $V$ we can define a descending chain of $\mathscr B$-domains satisfying conditions of Theorem \ref{te1.1} $V:=V_0\supset V_1\supset V_2\supset\dots\supset V_n$, $n\in\N$, such that each $V_i$ is constructed by deleting a hyperbolically-rare sequence of closed disks and then  a compact subset of the analytic capacity zero from $V_{i-1}$. 
}
\end{E}

Other examples of Riemann surfaces satisfying conditions of Theorem \ref{te1.1} will be presented in a forthcoming paper.  

In light of these results the following question seems plausible.
\begin{Qu}
Are there plain domains $D$ for which either 
${\rm dim}\,\mathfrak M(H^\infty(D))> 2$ or the \v{C}ech cohomology group $H^2(\mathfrak M(H^\infty(D)),\Z)$ is not trivial?
\end{Qu}

\sect{Maximal Ideal Space of $\mathbf{H^\infty(\mathbb D\times\N)}$} 
The proof of Theorem \ref{te1.1} relies on some properties of
the maximal ideal space of the algebra $H^\infty(\mathbb D\times\N)$. Previously, some  results on the structure of this space were obtained in \cite[Sect.\,4]{Be}. In particular, the corona theorem for $H^\infty(\mathbb D\times\N)$ is valid  and follows from Carleson estimates for solutions of the corona problem for $H^\infty(\Di)$, see, e.g., \cite[Thm.\,4.2]{Be}. The main result used in our proofs provides an additional information on the structure of this object.
\begin{Th}\label{te2.1}
We have
\begin{itemize}
\item[(a)] ${\rm dim}\,\mathfrak M(H^\infty(\Di\times\N))=2$;\smallskip

\item[(b)] $H^2(\mathfrak M(H^\infty(\Di\times\N),\Z)=0$.
\end{itemize}
\end{Th}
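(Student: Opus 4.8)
The plan is to analyze $\mathfrak M(A)$, where $A:=H^\infty(\Di\times\N)$, through the fibration over $\beta\N$ furnished by its $\ell^\infty$-structure, and to reduce both assertions to the known properties of $\mathfrak M(H^\infty(\Di))$. Identifying $A$ with $\ell^\infty\bigl(\N,H^\infty(\Di)\bigr)$, the characteristic idempotents $e_S$ of the clopen pieces $\Di\times S$ ($S\subseteq\N$) embed $\ell^\infty(\N)$ into $A$ via $c\mapsto c\cdot 1$, and dualizing yields a continuous surjection $\pi:\mathfrak M(A)\to\mathfrak M(\ell^\infty(\N))=\beta\N$. Over a principal ultrafilter $n\in\N$ the fiber $\pi^{-1}(n)$ is the clopen set $\{e_{\{n\}}=1\}$, canonically homeomorphic to $Y:=\mathfrak M(H^\infty(\Di))$ through $m\mapsto m\circ\mathrm{pr}_n$; over a free ultrafilter $p\in\beta\N\setminus\N$ the fiber is the maximal ideal space of the reduced power $A_p:=A/\{f:\lim_{n\to p}\|f_n\|=0\}$ of $H^\infty(\Di)$. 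By Behrens' corona theorem for $A$ \cite[Thm.\,4.2]{Be} the points $\Di\times\N$ are dense, and each slice $\Di\times\{n\}$ is dense in its fiber $Y$ by the classical corona theorem.

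For part (a) the lower bound is immediate: $\pi^{-1}(n)\cong Y$ is a closed subspace with ${\rm dim}\,Y=2$ by Su\'arez \cite{S}, so ${\rm dim}\,\mathfrak M(A)\ge 2$. For the upper bound I would use that the base is zero-dimensional (${\rm dim}\,\beta\N={\rm dim}\,\N=0$): given a finite open cover of $\mathfrak M(A)$, the zero-dimensionality of $\beta\N$ lets me refine $\pi$ over a clopen partition $\beta\N=\bigsqcup V_i$, and on each clopen tube $\pi^{-1}(V_i)$ the cover refines to order $\le 3$ by the fiberwise bound; assembling the disjoint pieces gives ${\rm dim}\,\mathfrak M(A)\le\sup_{p}{\rm dim}\,\pi^{-1}(p)$. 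Thus (a) reduces to the fiberwise estimate ${\rm dim}\,\pi^{-1}(p)\le 2$.

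For part (b) I would run the Leray spectral sequence of $\pi$ in \v{C}ech cohomology. Because $\beta\N$ has covering dimension $0$, and hence cohomological dimension $0$, one has $\check H^{k}(\beta\N,\mathcal F)=0$ for all $k\ge 1$ and every sheaf $\mathcal F$, so the sequence collapses to $H^2(\mathfrak M(A),\Z)\cong H^0\!\bigl(\beta\N,\,R^2\pi_*\Z\bigr)$, the global sections of the sheaf whose stalk at $p$ is $H^2(\pi^{-1}(p),\Z)$. Hence (b) reduces to the fiberwise vanishing $H^2(\pi^{-1}(p),\Z)=0$. Over principal $p=n$ both reductions are settled by Su\'arez, who gives ${\rm dim}\,Y=2$ and $H^2(Y,\Z)=0$.

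The main obstacle is therefore the behaviour of the fibers over the corona $\beta\N\setminus\N$, i.e.\ the maximal ideal spaces of the reduced powers $A_p$ of $H^\infty(\Di)$: I must show ${\rm dim}\,\mathfrak M(A_p)\le 2$ and $H^2(\mathfrak M(A_p),\Z)=0$. I would establish this by transferring Su\'arez's analysis to $A_p$, exploiting that the corona data for $H^\infty(\Di\times\N)$ are \emph{uniform} in $n$ (the Carleson bounds underlying \cite[Thm.\,4.2]{Be} depend only on the corona constant, not on the slice). Concretely, one route is to realize $\mathfrak M(A_p)$ as an ultracoproduct of copies of $Y$, a construction known to preserve the covering dimension and \v{C}ech cohomology of compacta; a more hands-on route is to carry out Su\'arez's Gleason-part description of $\mathfrak M(H^\infty(\Di))$ uniformly along the ultrafilter, so that the nontrivial parts remain analytic disks and the fiber again splits into a two-dimensional, cohomologically trivial boundary piece. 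Verifying that these estimates survive the passage to the reduced power is the technical heart of the argument; once in place, the fiberwise bounds feed into the two reductions above and yield ${\rm dim}\,\mathfrak M(A)=2$ and $H^2(\mathfrak M(A),\Z)=0$.
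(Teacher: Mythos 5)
Your reduction scheme is sensible: fibering $\mathfrak M(H^\infty(\Di\times\N))$ over $\beta\N$ via the $\ell^\infty$-subalgebra, identifying $\pi^{-1}(p)$ for free $p$ with $\mathfrak M(A_p)$, and using the zero-dimensionality of the base (via a Hurewicz-type theorem for the dimension, and the Leray spectral sequence with proper base change for $H^2$) to reduce both claims to fiberwise statements. These reductions can be made rigorous. But the proof stops exactly where the real work begins: you never establish ${\rm dim}\,\mathfrak M(A_p)\le 2$ and $H^2(\mathfrak M(A_p),\Z)=0$ for the fibers over $\beta\N\setminus\N$, and the two routes you sketch for this are not adequate. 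The identification of $\mathfrak M(A_p)$ with an ultracoproduct of copies of $\mathfrak M(H^\infty)$ is unjustified: the ultrapower of $H^\infty$ sits inside the ultrapower of $C(\mathfrak M(H^\infty))$ as a proper closed subalgebra, so its maximal ideal space is a priori only the target of a continuous map from the ultracopower, which need be neither surjective nor injective (compare the disk algebra inside $C(\To)$). Moreover, while Bankston-type results give preservation of covering dimension under ultracoproducts of compacta, preservation of \v{C}ech cohomology is not a standard fact you can simply invoke. The alternative route --- ``carry out Su\'arez's Gleason-part description uniformly along the ultrafilter'' --- is a program, not an argument; it amounts to redoing a substantial part of \cite{S} in a new setting.

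For contrast, the paper sidesteps the fibers over the corona of $\beta\N$ entirely. For (a) it realizes $\Di\times\Z$ as $r_u^{-1}(\Di)$ inside the universal cover of a planar domain $\Omega$ with $\pi_1(\Omega)=\Z$, and uses Su\'arez's extension theorem for sublevel sets of ${\rm Re}\,\hat r_u$ to exhibit $\mathfrak M(H^\infty(\Di\times\Z))$ as the union of two compacta, each homeomorphic to a compact subset of $\mathfrak M(H^\infty(\Di))$; the bound ${\rm dim}\le 2$ is then immediate. For (b) it does not pass through the fibration at all: by Hopf's theorem and the Novodvorskii--Taylor theory, vanishing of $H^2$ is equivalent to every rank-one $2\times 2$ idempotent over $H^\infty(\Di\times\N)$ being similar to ${\rm diag}(1,0)$, and this is proved slice-by-slice via Beurling--Lax--Halmos with norm bounds uniform in the slice, so that the similarities assemble into one over $\Di\times\N$. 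If you want to salvage your approach, you would need to either prove the fiberwise estimates for the reduced powers $A_p$ directly, or replace them by a global device of this kind; as written, the central claim of the theorem is assumed rather than proved.
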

Part (a) of the theorem follows from a result of independent interest describing the maximal ideal space $\mathfrak M(H^\infty(\Di\times\N))$ by means of $\mathfrak M(H^\infty(\Di))$.

\begin{Th}\label{teo3.1}
$\mathfrak M(H^\infty(\Di\times\N))$ can be covered by interiors of two compact subsets homeomorphic to a subset of $\mathfrak M(H^\infty(\Di))$. 
 \end{Th}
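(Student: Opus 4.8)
The plan is to identify $H^\infty(\Di\times\N)$ with the algebra of bounded holomorphic functions on a disjoint union of disks conformally embedded in $\Di$, and then to realize pieces of $\mathfrak M(H^\infty(\Di\times\N))$ inside $M:=\mathfrak M(H^\infty(\Di))$ by composing with bounded holomorphic maps into $\Di$. Since $\Di\times\N=\sqcup_n\Di_n$ with each $\Di_n$ a copy of $\Di$, we have $H^\infty(\Di\times\N)\cong\ell^\infty(\N,H^\infty(\Di))$. A unital homomorphism $T\colon H^\infty(\Di)\to H^\infty(\Di\times\N)$ is the same as a choice of holomorphic $\Theta\colon\Di\times\N\to\Di$ with $T(f)=f\circ\Theta$, and its transpose $T^*\colon\mathfrak M(H^\infty(\Di\times\N))\to M$ is injective exactly on the region where the functions $f\circ\Theta$ separate characters. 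Here lies the basic tension that forces two sets: to embed a whole fiber $M_n\cong M$ one needs $\Theta|_{\Di_n}$ to be a biholomorphism onto $\Di$, but to keep distinct fibers apart one needs the images $\Theta(\Di_n)$ to be disjoint, and these demands are incompatible for a single $\Theta$.

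To resolve this I would use two maps. First I fix two families $\{\ell_n^{(1)}\}_n$ and $\{\ell_n^{(2)}\}_n$ of pairwise disjoint simply connected subdomains of $\Di$, each $\ell_n^{(j)}$ meeting $\partial\Di$ in a boundary arc $I_n^{(j)}$, chosen hyperbolically separated (an interpolating configuration of lenses) and so that for every $n$ the open arcs $\mathrm{int}\,I_n^{(1)}$ and $\mathrm{int}\,I_n^{(2)}$ together cover $\partial\Di$. Taking $\Theta_j|_{\Di_n}$ to be a biholomorphism $\Di_n\to\ell_n^{(j)}$ gives homomorphisms $T_j$ and maps $T_j^*\colon\mathfrak M(H^\infty(\Di\times\N))\to M$. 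Next I would define the two compacts: using a cut-off $u_j\in H^\infty(\Di\times\N)$ that is close to $1$ on the part of each $\Di_n$ which $\Theta_j$ carries near the arc $I_n^{(j)}$ and close to $0$ on the complementary part, set $K_j:=\{\,\xi:\ |u_j(\xi)|\ge\tfrac12\,\}$. Because the two good halves cover $\Di\times\N$ and, by the corona theorem for $H^\infty(\Di\times\N)$, $\Di\times\N$ is dense in $\mathfrak M(H^\infty(\Di\times\N))$, the interiors of $K_1$ and $K_2$ cover the whole maximal ideal space. It then remains to prove that $T_j^*$ restricted to $K_j$ is injective, for then it is a homeomorphism of $K_j$ onto a compact subset of $M$, which is the assertion.

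The hard part will be this injectivity, i.e.\ showing that on $K_j$ the functions $f\circ\Theta_j$ separate characters, and it splits into two separations. Within a single fiber over a boundary point $p\in\mathrm{int}\,I_n^{(j)}$, separation should follow from a localization principle for $H^\infty$: since $\ell_n^{(j)}$ and $\Di$ share the boundary arc $I_n^{(j)}$, a conformal straightening and Schwarz reflection identify the fibers of $H^\infty(\ell_n^{(j)})$ and $H^\infty(\Di)$ over interior points of that arc, so the global functions $f\circ\Theta_j$ do distinguish the characters in $K_j$ lying over $\mathrm{int}\,I_n^{(j)}$. Across distinct fibers and over the corona $\beta\N\setminus\N$, separation is where the uniform Carleson estimates enter: the disjoint, hyperbolically separated lenses $\ell_n^{(j)}$ form an interpolating family, so their closure in $M$ reproduces the fibration of $\mathfrak M(H^\infty(\Di\times\N))$ over $\beta\N$ faithfully (closures of interpolating configurations are copies of $\beta\N$), keeping the fibers apart. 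Verifying the localization step cleanly, and checking that the arc cover together with the interpolation makes $K_1,K_2$ genuinely embeddable while their interiors still cover, is the technical heart of the argument.
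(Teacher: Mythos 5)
Your strategy runs in the opposite direction from the paper's (you map $\Di\times\N$ \emph{into} $\Di$, the paper realizes it \emph{inside} a covering disk), and as written it breaks at the very first step: the configuration of lenses you postulate does not exist. For the Schwarz-reflection localization you invoke, each $\ell_n^{(j)}$ must contain a full one-sided neighbourhood in $\Di$ of every interior point of its arc $I_n^{(j)}$; since the family $\{\ell_n^{(j)}\}_{n}$ is pairwise disjoint, the open arcs $\{\mathrm{int}\,I_n^{(j)}\}_n$ are then forced to be pairwise disjoint as well, so $\sum_n |I_n^{(1)}|\le 2\pi$ and $\sum_n |I_n^{(2)}|\le 2\pi$. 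But your requirement that $\mathrm{int}\,I_n^{(1)}$ and $\mathrm{int}\,I_n^{(2)}$ cover $\partial\Di$ for \emph{every} $n$ gives $|I_n^{(1)}|+|I_n^{(2)}|\ge 2\pi$ for every $n$, hence $\sum_n(|I_n^{(1)}|+|I_n^{(2)}|)=\infty$, a contradiction. What you actually need is that the two \emph{preimage} arcs $(\Theta_j|_{\Di_n})^{-1}(I_n^{(j)})$ cover $\partial\Di_n$; this can be arranged by composing with automorphisms of $\Di_n$, but then the claim that $\mathring K_1\cup\mathring K_2$ covers the whole maximal ideal space is no longer the near-triviality you present it as and has to be verified fiber by fiber and over $\beta\N\setminus\N$.

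Second, and more seriously, the injectivity of $T_j^*$ on the part of $K_j$ lying over $\beta\N\setminus\N$ does not follow from ``closures of interpolating configurations are copies of $\beta\N$.'' That Hoffman--Carleson type statement separates limit characters arising from different ultrafilters on $\N$; it does nothing to separate two distinct characters lying over the \emph{same} point of $\beta\N\setminus\N$. To separate those by functions of the form $f\circ\Theta_j$ you need every bounded sequence $(g_n)$ with $g_n\in H^\infty(\ell_n^{(j)})$ to be asymptotically realized by the restrictions $f|_{\ell_n^{(j)}}$ of a single $f\in H^\infty(\Di)$ --- an interpolation theorem for functions on a sequence of \emph{domains}, not points. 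This is essentially the content of Behrens' hyperbolic-rarity condition and his decomposition of $H^\infty$, and it requires a genuine proof (and an extra hypothesis on the lenses) that your sketch does not contain. The paper sidesteps both difficulties: it realizes $\Di\times\Z$ as $U=r_u^{-1}(\Di)$ inside the universal covering $r_u:\Di\rightarrow\Omega$ of the two-lobed domain $\Omega=(\Di\cup\Di_1(\frac32))\setminus\{\frac32\}$, so that $H^\infty(\Di)|_{U}\subset H^\infty(U)$ separates automatically, and it cuts out the two compact pieces as $\hat p^{-1}(S)$ and $\hat p^{-1}(S_o)$ for two half-disks $S,S_o$ of the base, identifying each with the sublevel set $\{\hat h\le\frac12\}\subset\mathfrak M(H^\infty(\Di))$ via Su\'arez's extension theorem \cite[Thm.\,3.2]{S}, which handles all limit characters at once. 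If you want to salvage your route you will need to import a Behrens-type interpolation theorem for the lens family, at which point the argument is no longer simpler than the paper's.
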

 \begin{proof}
 Since $\N$ and $\Z$ are both countable, manifolds $\Di\times\N$ and $\Di\times\Z$ are biholomorphic. Hence, $\mathfrak M(H^\infty(\Di\times\N))$ and $\mathfrak M(H^\infty(\Di\times\Z))$ are homeomorphic. It is more convenient for us to work with the latter space.
 
 In what follows, $\Di_r(c):=\{z\in\Co\, :\, |z-c|<r\}$, $r>0$, $c\in\Co$, i.e., $\Di:=\Di_1(0)$. For a subset $S$ of a topological space we denote by  $\bar S$ and $\mathring S$ its closure and interior. Also, we set $H^\infty:=H^\infty(\Di)$.
  
Let 
\begin{equation}\label{equ3.1}
\Omega:=\bigl(\Di\cup\Di_1({\scriptstyle \frac 3 2})\bigr)\setminus\bigl\{\mbox{$ \frac 3 2$}\bigr\}.
\end{equation}
The fundamental group $\pi_1(\Omega)$ of $\Omega$ is isomorphic to $\Z$, i.e., $\pi_1(\Omega)=\{a^n\}_{n\in\Z}$ for some $a\in \pi_1(\Omega)$.
Let $r_u:\Di\rightarrow\Omega$ be the universal covering of $\Omega$. The deck transformation group  $\pi_1(\Omega)$ acts discretely on $\mathbb D$ by M\"{o}bius transformations. Since $r_u\in H^\infty$, it extends to a function $\hat r_u\in C(\mathfrak M(H^\infty))$ such that $\hat r_u(\mathfrak M(H^\infty))=\bar\Omega$.  Let $U:=r_u^{-1}(\Di)\subset\Di$. Since each loop in $\Di$ is contractible in $\Omega$, 
\begin{equation}\label{equ3.1a}
U=\bigsqcup_{g\in\pi_1(\Omega)}g(U')
\end{equation}
for some $U'\subset\Di$ biholomorphic to $\Di$ via $r_u$. In particular, the map  $s: U\rightarrow \Di\times\Z$, 
\begin{equation}\label{equ3.2}
s(z):=(r_u(z),n),\quad  z\in a^n(U'),\quad  n\in\Z,
\end{equation}
is biholomorphic; hence, the pullback by $s$ defines an isomorphism of Banach algebras $s^*: H^\infty(\mathbb D\times\Z)\rightarrow H^\infty(U)$. 

We denote by $\tilde s: \mathfrak M(H^\infty(U))\rightarrow \mathfrak M(H^\infty(\Di\times\Z))$, $\tilde s|_U=s$,  the homeomorphism of the maximal ideal spaces induced by the transpose $(s^*)^*$ of $s^*$.\footnote{Here and below we identify $U$ and $\Di\times\Z$ with their images under the natural embeddings in the corresponding maximal ideal spaces.}

Next, we consider the function $h:={\rm Re}\, r_u$
and its extension $\hat h:={\rm Re}\, \hat r_u\in C(\mathfrak M(H^\infty))$. 
By the definition of $\Omega$, the open set 
\begin{equation}\label{equ3.3}
U_1:=\left\{z\in\Di\, :\, h(z)<\mbox{$\frac 3  4$}\right\}\subset U
\end{equation}
is the preimage under $r_u$ of the set $\{z\in\Di\, :\, {\rm Re}(z)<\frac 3 4\}\subset\Di$. By the corona theorem $U_1$ is dense in 
\begin{equation}\label{equ3.4}
\widetilde U_1:=\left\{\xi\in \mathfrak M(H^\infty)\, :\, \hat h(\xi)<\mbox{$\frac 3  4$}\right\}.
\end{equation}
Moreover, by  \cite[Thm.\,3.2]{S}, each $f\in H^\infty(U_1)$ extends to a (unique) $\hat f\in C(\widetilde U_1)$. In particular, this is valid for  $f\in H^\infty(U)|_{U_1}$.
This determines an isometric homomorphism of Banach algebras $e: H^\infty(U)\rightarrow C(U\cup \widetilde U_1)$ whose transpose $e^*$ induces a continuous injection of $U\cup\widetilde U_1$ into $\mathfrak M(H^\infty(U))$ such that $e^*|_U={\rm id}|_{U}$.
 
Further, 
\begin{equation}\label{equ3.5}
\widetilde V:=\left\{\xi\in \mathfrak M(H^\infty)\, :\, \hat h(\xi)\le\mbox{$\frac 1 2 $}\right\}\subset\widetilde U_1.
\end{equation}
Then $e^*$ maps the compact set $\widetilde V$ homeomorphically onto its image in $\mathfrak M(H^\infty(U))$. 

We have 
\[
(s\circ e^*)(U)=\Di\times\Z\quad {\rm and}\quad p\circ s\circ e^*|_U=r_u|_U,
\]
where $p:\Di\times\Z\rightarrow\Di$ maps $(z,n)$ to $z$, $z\in \Di$, $n\in\Z$.   

Thus,
\[
\hat p\circ \tilde s\circ e^*|_{U\cup \widetilde U_1}=\hat r_u|_{U\cup \widetilde U_1};
\]
here $\hat p: \mathfrak M(H^\infty (\Di\times\Z))\rightarrow\bar\Di$ is the extension of $p$ via the Gelfand transform.
In particular, if $S:=\{z\in\bar \Di\, :\, {\rm Re}(z)\le\frac 1 2\}$, then $\tilde s\circ e^*$  maps $\widetilde V$ homeomorphically onto $\hat p^{-1}(S)$. We set $S_o:=\{z\in\bar\Di\, :\, -z\in S\}:=\{z\in\bar \Di\, :\, {\rm Re}(z)\ge -\frac 1 2\}$. Then $\bar \Di=\mathring S\cup \mathring S_o$ so that 
\[
\mathfrak M(H^\infty (\Di\times\Z))=\hat p^{-1}(\bar\Di)=\hat p^{-1}(\mathring S)\cup \hat p^{-1}(\mathring S_o)=\hat p^{-1}(S)\cup \hat p^{-1}(S_o),
\] 
where each term in the last expression is homeomorphic to $\widetilde V$.
\end{proof}

\begin{proof}[Proof of Theorem \ref{te2.1}]
(a) Since $\mathfrak M(H^\infty (\Di\times\Z))=\hat p^{-1}(S)\cup \hat p^{-1}(S_o)$, where each of the subsets is homeomorphic to the compact subset $\widetilde V\subset\mathfrak M(H^\infty)$, see above, and  ${\rm dim}\, \mathfrak M(H^\infty)=2$,  ${\rm dim}\,\mathfrak M(H^\infty (\Di\times\Z))\le {\rm dim}\,\widetilde V\le 2$. It equals 2 as $\mathfrak M(H^\infty (\Di\times\Z))$ contains the 2-dimensional subset $\Di\times\Z$.\medskip

\noindent (b) Since ${\rm dim}\,\mathfrak M(H^\infty(\Di\times\N))=2$, by the Hopf theorem, see, e.g., \cite{Hu}, elements of the cohomology group $H^2(\mathfrak M(H^\infty(\Di\times\N)),\Z)$ are in a one-to-one correspondence with elements of the set $[\mathfrak M(H^\infty(\Di\times\N)),\mathbb S^2]$ of homotopy classes of continuous maps of $\mathfrak M(H^\infty(\Di\times\N))$ to the two-dimensional unit sphere $\mathbb S^2$. In turn, according to the Novodvorskii-Taylor theory, there is a bijection of $[\mathfrak M(H^\infty(\Di\times\N)),\mathbb S^2]$ onto the set $[{\rm ID}_1(H^\infty(\Di\times\N)_2)]$ of connectivity components of the class of idempotents consisting of $2\times 2$ matrices with entries in $H^\infty(\Di\times\N)$ of constant rank $1$, see
\cite[Sec.\,5.3,\,page\,186]{Ta}. Thus to show that $H^2(\mathfrak M(H^\infty(\Di\times\N)),\Z)=0$ we must prove that each idempotent $F\in {\rm ID}_1(H^\infty(\Di\times\N)_2)$ is similar (by an invertible $2\times 2$ matrix with entries in $H^\infty(\Di\times\N)$) to a matrix of the form
\begin{equation}\label{eq7.1}
{\rm diag}(1,0):=\left[
\begin{array}{cc}
1&0\\
0&0
\end{array}
\right].
\end{equation}
The proof follows the lines of the proof of the Theorem in \cite[Sect.\,5]{Br5}.

We set 
\begin{equation}\label{eq7.2}
F_m:=F|_{\Di\times\{m\}},\quad m\in\N.
\end{equation}
Then $N_{1m}=\ker(F_m)$ and $N_{2m}=\ker(I_2-F_m)$, where  $I_2$ is the identity $2\times 2$ matrix, are weak$^*$ closed $H^\infty$-submodules of $H_2^\infty\, (:= H^\infty\oplus H^\infty)$ (i.e., if $\{f_n\}_{n\in\N}\subset N_{im}$ is a bounded sequence pointwise converging to $f\in H_2^\infty$, then $f\in N_{im}$). Since $F_m$ is of constant rank $1$, the famous Beurling-Lax-Halmos theorem, see, e.g., \cite{Ni}, \cite[p.\,1025]{To1}, implies that $N_{im}=H_{im}\cdot H^\infty$, where $H_{im}$ is a $2\times 1$ matrix with entries in $H^\infty$ of constant rank $1$ such that
\begin{equation}\label{eq7.3}
H_{im}^* \cdot H_{im}=1\quad {\rm a.e.\ on}\ \mathbb S:=\{z\in\Co\, :\, |z|=1\}.
\end{equation}
Hence, since the columns of $I_2-F_m$  and of $F_m$ belong to $N_{1m}$ and $N_{2m}$, respectively,
$I_2-F_m=H_{1m}\cdot G_{1m}$ and $F_m=H_{2m}\cdot G_{2m}$ for some $1\times 2$ matrices $G_{im}$ with entries in $H^\infty$ of constant rank $1$. Then we have
\begin{equation}\label{eq7.4}
\begin{array}{l}
\displaystyle H_{1m}\cdot G_{1m}=(I_2-F_m)=(I_2-F_m)^2=H_{1m}\cdot (G_{1m}\cdot H_{1m})\cdot G_{1m},\\
\\
H_{2m}\cdot G_{2m}=F_m=F_m^2=H_{2m}\cdot (G_{2m}\cdot H_{2m})\cdot G_{2m},\\
\\
(H_{1m}\cdot G_{1m})(H_{2m}\cdot G_{2m})=(H_{2m}\cdot G_{2m})(H_{1m}\cdot G_{1m})=0.
\end{array}
\end{equation}
These,  \eqref{eq7.3} and the maximum modulus principle for $H^\infty$ imply that
\begin{equation}\label{eq7.5}
G_{1m}\cdot H_{1m}=1,\quad G_{2m}\cdot H_{2m}=1,\quad G_{1m}\cdot H_{2m}=G_{2m}\cdot H_{1m}=0.
\end{equation}
Let us define $2\times 2$ matrices with entries in $H^\infty$
\[
H_m:=(H_{1m}\, H_{2m})\quad {\rm and}\quad G_m:=\left(\!\!\!
\begin{array}{c}
G_{1m}\\
G_{2m}
\end{array}
\!\!\!\right).
\]
Then  \eqref{eq7.5}, the definitions of $G_{im}$ and \eqref{eq7.3} imply that
\begin{equation}\label{eq7.6}
\begin{array}{l}
\displaystyle
G_m=H_m^{-1},\qquad H_{m}^{-1}\cdot F_m\cdot H_m={\rm diag}(1,0)\quad {\rm and}\\
\\
\displaystyle \|H_m\|_{{\scriptstyle H_{2}^\infty\rightarrow H_2^\infty}}\le 1,\qquad \|H_m^{-1}\|_{{\scriptstyle H_{2}^\infty\rightarrow H_2^\infty}}\le 1+\|F_m\|_{{\scriptstyle H_{2}^\infty\rightarrow H_2^\infty}}.
\end{array}
\end{equation}
Finally, let us define a $2\times 2$ matrix $H$ on $\Di\times\N$ by the formula
\begin{equation}\label{eq7.7}
H|_{\Di\times\{m\}}=H_m,\quad m\in\N.
\end{equation}
Since $\sup_{m\in\N}\|F_m\|_{{\scriptstyle H_{2}^\infty\rightarrow H_2^\infty}}<\infty$, \eqref{eq7.6} shows that $H$ is an invertible $2\times 2$ matrix with entires in $H^\infty(\Di\times\N)$ such that
$H^{-1}|_{\Di\times\{m\}}=H_m^{-1}$, $m\in\N$, and $H^{-1}\cdot F\cdot H={\rm diag}(1,0)$.
 
This completes the proof of part (b) of the theorem.
\end{proof}

\sect{Proofs of Theorem \ref{te1.1} and Corollary \ref{cor1.2}}
\subsect{Structure of $\mathfrak M(H^\infty(U))$}
Let  $U$ be a $\mathscr B$-domain obtained from a domain $V\subset\Co$ by
deleting a (possibly finite) hyperbolically-rare sequence of closed disks $\{\Delta_n\}\subset V$ with centers $\alpha_n$ and let $V$ be dense in $\mathfrak M(H^\infty(V))$. Then by \cite[Thm.\,6.1]{Be}
$U$ is dense in $\mathfrak M(H^\infty(U))$ and the latter space has the described below structure.

Let $R: \mathfrak M(H^\infty(U))\rightarrow \mathfrak M(H^\infty(V))$ be the continuous map transposed to the restriction homomorphism $H^\infty(V)\rightarrow H^\infty(U)$, $f\mapsto f|_U$. If the sequence $\{\alpha_n\}$ is infinite, we denote by $F:\beta\N\rightarrow\mathfrak M(H^\infty(V))$ the continuous extension of the map $\N\rightarrow V$, $n\mapsto\alpha_n$, to the Stone-\v{C}ech compactification of $\N$ and set 
\begin{equation}\label{Set}
S:=F(\beta\N\setminus\N)\subset \mathfrak M(H^\infty(V))\setminus V.
\end{equation}
Under these notations we have following properties.
\begin{itemize}
\item[(1)]  There is a continuous embedding $i:\bigl(\mathfrak M(H^\infty(V))\setminus V\bigr)\cup U\hookrightarrow\mathfrak M(H^\infty(U))$ such that $R\circ i={\rm id}$ and $i$ is invertible on $\bigl(\mathfrak M(H^\infty(V))\setminus (V\cup S)\bigr)\cup U$, \cite[Thm.\,3.1]{Be}.\smallskip
\item[(2)]  There is a continuous mapping $G:\mathfrak M(H^\infty(\Di\times\N))\setminus (\Di\times\N)\rightarrow R^{-1}(S)$ which maps $\mathfrak M(H^\infty(\Di\times\N))\setminus \bigl((\Di\times\N)\cup (\{0\}\times (\beta\N\setminus\N))\bigr)$ homeomorphically onto $R^{-1}(S)\setminus i(S)$ and $\{0\}\times (\beta\N\setminus\N)$ surjectively onto $i(S)$,
\cite[Thm.\,6.2]{Be}.\smallskip
\item[(3)] $R^{-1}(\partial\Delta_n)$  is homeomorphic to $\mathfrak M(H^\infty(\Di))\setminus\Di$ for all $n\in\N$.
 \end{itemize}
 The last property easily follows from the decomposition of $H^\infty(U)$, see \cite[Sec.\,2]{Be}.
 
In turn, if the sequence $\{\alpha_n\}$ is finite, then $\mathfrak M(H^\infty(U))$ satisfies property (1) with $S=\emptyset$ and property (2).
 \subsect{Proof of Theorem \ref{te1.1}}
 We prove the theorem under the condition that $\{\alpha_n\}$ is infinite. If it is finite, then the proof is simpler but still repeats some of the arguments presented below. We leave the details to the reader.
\begin{proof}
First, we prove that under conditions of the theorem ${\rm dim}\,\mathfrak M(H^\infty(U))=2$.
 
 To this end, it suffices to prove that the compact set $K_1:=R^{-1}((V\setminus U)\cup S)$ has dimension $\le 2$ and each compact subset of the open set $K_2:=\mathfrak M(H^\infty(U))\setminus K_1$ has dimension $\le 2$ as well, see, e.g., \cite[Ch.\,2,\,Thm.\,9-11]{N}. 
 
 By this definition, 
 \[
 K_2=R^{-1}\bigl(U\cup \bigl(\mathfrak M(H^\infty(V))\setminus (V\cup S)\bigr)\bigr).
 \]
 Hence, since $R|_{U}={\rm id}$, due to property (1) each compact subset $Z\subset K_2$ is homeomorphic to a compact subset of $U\cup (\mathfrak M(H^\infty(V))\setminus (V\cup S))\subset \mathfrak M(H^\infty(V))$. In particular, ${\rm dim}\, Z\le 2$  because ${\rm dim}\,\mathfrak M(H^\infty(V))=2$ by the hypotheses of the theorem.
 
Next, 
\begin{equation}\label{ka1}
K_1=\bigsqcup_n R^{-1}(\partial\Delta_n)\sqcup R^{-1}(S);
\end{equation}
here $\partial \Delta_n$ stands for the boundary of $\Delta_n$.

Hence, due to \cite[Ch.\,2,\,Thm.\,9-11]{N} ${\rm dim}\, K_1\le 2$ iff
${\rm dim}\, R^{-1}(S)\le 2$ and ${\rm dim}\, Z\le 2$ for each compact subset $Z\subset\sqcup_n\, R^{-1}(\partial\Delta_i)$. 

Since each $R^{-1}(\partial\Delta_i)$ is a relatively clopen subset of $K_1$, there is some $n=n(Z)$ such that $Z\subset \sqcup_{1\le k\le n}\, R^{-1}(\partial\Delta_k)$. Then according to property (3) and \cite[Thm.\,4.5]{S}
\[
{\rm dim}\, Z\le\max_{1\le k\le n}{\rm dim}\,R^{-1}(\partial\Delta_k)={\rm dim}\,\bigl(\mathfrak M(H^\infty)\setminus\Di\bigr)=2.
\]

Thus it remains to show that ${\rm dim}\, R^{-1}(S)\le 2$. We have
\[
R^{-1}(S)=(R^{-1}(S)\setminus i(S))\sqcup i(S).
\]
According to property (2) and Theorem \ref{te2.1} for each compact subset $Z\subset R^{-1}(S)\setminus i(S)$ we obtain
\[
{\rm dim}\,Z\le {\rm dim}\,\left(\mathfrak M(H^\infty(\Di\times\N))\setminus \bigl((\Di\times\N)\cup (\{0\}\times (\beta\N\setminus\N))\bigr)\right)\le {\rm dim}\,\mathfrak M(H^\infty(\Di\times\N))=2.
\]
Also, according to property (1) and the hypotheses of the theorem
\[
{\rm dim}\,i(S)={\rm dim}\, S\le 2.
\]
This and \cite[Ch.\,2,\,Thm.\,9-11]{N} imply that ${\rm dim}\, R^{-1}(S)\le 2$. Combining all preceding inequalities and using that ${\rm dim}\, U=2$ we obtain that ${\rm dim}\,\mathfrak M(H^\infty(U))=2$, as required.\smallskip

Now, let us prove that under the hypotheses of the theorem $H^2(\mathfrak M(H^\infty(U)),\Z)=0$. 

To this end, using the natural one-to-one correspondence
between isomorphism classes of rank 1 complex vector bundles on $\mathfrak M(H^\infty(U))$ and elements of $H^2(\mathfrak M(H^\infty(U)),\Z)$ given by the first Chern classes of the bundles, it suffices to prove that each rank $1$ complex vector bundle $E$ on $\mathfrak M(H^\infty(U))$ is trivial, i.e., admits a nowhere vanishing continuous section. We prove this in several stages.

First, let us show that $E|_{i(S)}$ is trivial. 

Indeed, since  by the hypotheses ${\rm dim}\,\mathfrak M(H^\infty(V))=2$ and $H^2(\mathfrak M(H^\infty(V),\Z)=0$, by property (1) employing the long cohomology sequence of the pair $\bigl(\mathfrak M(H^\infty(V)),i(S)\bigr)$ we obtain that $H^2(i(S),\Z)=0$. Hence, the first Chern class of $E|_{i(S)}$ is 0, i.e.,
the bundle $E|_{i(S)}$ is trivial.  By $s_1$ we denote a nowhere continuous section of $E|_{i(S)}$.

Next, we show that  $E|_{R^{-1}(S)}$  is trivial.

Let $G^*(E|_{R^{-1}(S)})$ be the pullback by $G$ of the bundle $E|_{R^{-1}(S)}$ to the compact set $\mathfrak M(H^\infty(\Di\times\Z))\setminus (\Di\times\N)$. Since $\mathfrak M(H^\infty(\Di\times\Z))=2$ and $H^2(\mathfrak M(H^\infty(\Di\times\Z)),\Z)=0$ by Theorem \ref{te2.1},
as above we obtain that $H^2\bigl(\mathfrak M(H^\infty(\Di\times\Z))\setminus (\Di\times\N),\Z\bigr)=0$. Hence,  the bundle $G^*(E|_{R^{-1}(S)})$ is trivial, i.e. has a nowhere continuous section, say, $s_2$.

Due to property (2), $G^*s_1$ is a nowhere continuous section of  the restriction of the bundle $G^*(E|_{R^{-1}(S)})$  to $G^{-1}(i(S))=\{0\}\times (\beta\N\setminus\N)$. Hence, 
$f:=(G^*s_1)\cdot s_2^{-1}|_{\{0\}\times (\beta\N\setminus\N)}$ is a nowhere vanishing continuous function on $\{0\}\times (\beta\N\setminus\N)$.

Further, the pullback by means of the projection $\Di\times\N\rightarrow\N$, $(z,n)\mapsto n$, determines a monomorphism of Banach algebras $\ell^\infty\hookrightarrow H^\infty(\Di\times\N)$ whose transpose induces a continuous surjection $P:\mathfrak M(H^\infty(\Di\times\N))\rightarrow\beta(\N)$ such that
$P^{-1}(\N)=\Di\times\N$. In particular, $P$ maps the compact set $\mathfrak M(H^\infty(\Di\times\N))\setminus (\Di\times\N)$ onto $\{0\}\times (\beta\N\setminus\N)$.

We set
\[
g:=P^*f\in C\bigl(M(H^\infty(\Di\times\N))\setminus (\Di\times\N),\Co^*\bigr).
\]
Then $g\cdot s_2$ is nowhere vanishing continuous section of $G^*(E|_{R^{-1}(S)})$ whose restriction to $G^{-1}(i(S))$ coincides with $G^*s_1$. In turn, according to property (2), there is a continuous nowhere vanishing section $s_3$ of $E|_{R^{-1}(S)}$ such that $s_3|_{i(S)}=s_1$ and $G^*s_3=g\cdot s_2$. This shows that the bundle
$E|_{R^{-1}(S)}$  is trivial.

Now, let us show that $E|_{K_1}$ is trivial, see \eqref{ka1}. 

Indeed, since $E|_{R^{-1}(S)}$  is trivial, there is a relatively open neighbourhood $O\subset K_1$ of $R^{-1}(S)$ such that $E|_O$ is trivial (this follows from the standard extension property of sections of vector bundles). Then the compact set
$K_1\setminus O$ is covered by relatively clopen pairwise disjoint sets $R^{-1}(\partial\Delta_n)$, $n\in\N$. In particular, there is some $n_0\in\N$ such that
\begin{equation}\label{embed}
K_1\setminus O\subset \bigsqcup_{1\le k\le n_0}R^{-1}(\partial\Delta_k).
\end{equation}
According to property (3) and \cite[Thm.\,4.5]{S} 
\[
{\rm dim}\,\left(\bigsqcup_{1\le k\le n_0}R^{-1}(\partial\Delta_k)\right)=2\quad {\rm and}\quad H^2\left(\bigsqcup_{1\le k\le n_0}R^{-1}(\partial\Delta_k),\Z\right)=0.
\]
Hence, the restriction of $E$ to $\sqcup_{1\le k\le n_0}R^{-1}(\partial\Delta_k)$ is trivial. 

Then $\bigl(\{R^{-1}(\partial\Delta_k)\}_{1\le k\le n_0}, K_1\setminus\bigl(\sqcup_{1\le k\le n_0}R^{-1}(\partial\Delta_k)\bigr)\bigr)$ is an open cover of $K_1$ by pairwise disjoint relatively open sets and the restriction of $E$ to each of the sets is trivial, see \eqref{embed}. Thus, the bundle $E|_{K_1}$ is trivial.

Finally, we prove that the bundle $E$ is trivial.

Due to property (1) and the hypotheses of the theorem the restriction of $E$ to each compact subset of $i\bigl((\mathfrak M(H^\infty(V))\setminus V)\cup U\bigr)$ is trivial. On the other hand, due to the previous statement there is an open neighbourhood $O_1$ of $K_1$ such that $E|_{O_1}$ is trivial. Let $O_2$ be another open neighbourhood of $K_1$ such that $\overline{O}_2\subset O_1$. Then $\mathfrak M(H^\infty(U))\setminus O_2$ is a compact subset of $i\bigl((\mathfrak M(H^\infty(V))\setminus V)\cup U\bigr)$ and $\bigl(\mathfrak M(H^\infty(U))\setminus \overline{O}_2, O_1\bigr)$ is an open cover of $\mathfrak M(H^\infty(U))$. By the definition, 
\[
Y_1:=i^{-1}\bigl(\mathfrak M(H^\infty(U))\setminus \overline{O}_2\bigr)
\quad {\rm 
 and}\quad  Y_2:=i^{-1}\bigl(O_1\setminus K_1\bigr)
  \]
  are open subsets of $(\mathfrak M(H^\infty(V))\setminus V)\cup U$ and  $\bigl((\mathfrak M(H^\infty(V))\setminus V)\cup U\bigr)\setminus S$, see \eqref{Set}, such that $Y_1\cup Y_2$ covers $(\mathfrak M(H^\infty(V))\setminus V)\cup U$. Moreover,
\begin{equation}\label{exten}
Y_3:=Y_2\cup S\cup\left(\bigsqcup_i\,\overline{\Delta}_i\right)
\end{equation}
is an open neighbourhood of the compact set $S\cup\bigl(\sqcup_{i}\,\overline{\Delta}_i\bigr)\subset \mathfrak M(H^\infty(V))$ such that 
\[
R^{-1}(Y_3)=O_1.
\]

Let $t_1$ and $t_2$ be continuous nowhere vanishing sections of the restrictions of $E$ to $O_1$ and $\mathfrak M(H^\infty(U))\setminus \overline{O}_2$, respectively (existing by the previous arguments). Then
\[
t_{12}:=t_1^{-1}\cdot t_2\in C\bigl((\mathfrak M(H^\infty(U))\setminus \overline{O}_2)\cap O_1,\Co^*\bigr).
\]
By our construction, $(\mathfrak M(H^\infty(U))\setminus \overline{O}_2)\cap O_1\subset i\bigl((\mathfrak M(H^\infty(V))\setminus V)\cup U\bigr)$ and 
\[
i^{-1}\bigl((\mathfrak M(H^\infty(U))\setminus \overline{O}_2)\cap O_1\bigr) =Y_1\cap Y_3.
\]
Hence, the pullback $(i^{-1})^* t_{12}$ by $i^{-1}$ is a $1$-cocycle on the cover $(Y_1,Y_3)$ of $\mathfrak M(H^\infty(V))$ with values in $\Co^*$. Each such cocycle determines a complex rank 1 vector bundle on $\mathfrak M(H^\infty(V))$ which according to the hypotheses of the theorem is trivial. Hence, there exist $\tilde t_1\in C(Y_3,\Co^*)$ and $\tilde t_2\in C(Y_1,\Co^*)$ such that
\[
\tilde t_2^{-1}\cdot\tilde t_1=(i^{-1})^* t_{12}\quad {\rm on}\quad Y_1\cap Y_3.
\]
The latter implies that
\[
(R^*\tilde t_2)^{-1}\cdot (R^*\tilde t_1)=R^*((i^{-1})^*t_{12})=t_1^{-1}\cdot t_2\quad {\rm on}\quad (\mathfrak M(H^\infty(U))\setminus \overline{O}_2)\cap O_1,
\]
because $R^*(i^{-1})^*:=(i^{-1}\circ R)^*={\rm id}$ on $\bigl((\mathfrak M(H^\infty(V))\setminus V)\cup U\bigr)\setminus S$ by property (1).
Here $R^*\tilde t_1\in C(O_1,\Co^*)$ and $R^* \tilde t_2\in C\bigl(\mathfrak M(H^\infty(U))\setminus \overline{O}_2 ,\Co^*\bigr)$. 

Thus continuous nowhere vanishing sections $t_1\cdot R^*\tilde t_1$ of $E|_{O_1}$ and $t_2\cdot R^*\tilde t_2$ of $E|_{\mathfrak M(H^\infty(U))\setminus \overline{O}_2}$ coincide on $(\mathfrak M(H^\infty(U))\setminus \overline{O}_2)\cap O_1$ and so determine a continuous nowhere vanishing section of $E$, i.e., the bundle $E$ is trivial.

The proof of the theorem is complete.
\end{proof}
\subsect{Proof of Corollary \ref{cor1.2}} The result follows from Theorem \ref{te1.1} by \cite[Cor.\,1.4]{BS}.

\sect{Proof of Theorem \ref{appen}}
Let $r:S'\rightarrow S$ be an unbranched covering of a bordered Riemann surface $S$. We have to prove that
\begin{itemize}
\item[(a)] $S'$ is dense in $\mathfrak M(H^\infty(S'))$;\smallskip
\item[(b)] ${\rm dim}\,\mathfrak M(H^\infty(S'))=2$;\smallskip
\item[(c)] $H^2(\mathfrak M(H^\infty(S')),\Z)=0$.
\end{itemize}

In fact, part (a) was proved in \cite[Cor.\,1.6]{Br1} and part (c) follows from the projective freeness of $H^\infty(S')$ established in \cite[Thm.\,1.5]{BS} (see also \cite[Sect.\,3]{Br5}).  Thus it remains to prove part (b) only.  The proof is based on some results and constructions of the theory developed in \cite{Br1}, \cite{Br2}. We refer to these papers for additional details.

\subsect{Auxiliary Results} For  the facts presented in this section see, e.g., \cite[Sec.\,2.2]{Br2},  \cite{Br3} and references therein.
 
 It is well known that $S$ can be regarded as a domain in a compact Riemann surface $R$ such that
\begin{equation}\label{eq4.1}
R\setminus \bar S=\bigsqcup_{i=1}^k D_i,
\end{equation}
where $D_i$ are open disks with analytic boundaries.

Let  $S_o\subset R$ be another bordered Riemann surface containing $\bar S$ as a deformation retract. By the covering homotopy theorem there is an unbranched covering $r: S_{o}'\rightarrow S_o$ such that $S'\subset S_{o}'$ is a domain and $r|_{S'}: S'\rightarrow S$ is the given unbranched covering of $S$. 

The covering $r: S_{o}'\rightarrow S_o$  can be viewed as a fiber bundle over $S_o$ with a discrete fiber $F$. Let $E(S_o,\beta F)$ be the space obtained from $S_o'$ by taking the Stone-\v{C}ech compactifications of fibres under $r$. Then $E(S_o,\beta F)$ is a normal Hausdorff space of covering dimension $2$ and $r$ extends to a continuous map $r_E: E(S_o,\beta F)\rightarrow S_o$ such that $\bigl(E(S_o,\beta F),S_o,r_E,\beta F\bigr)$ is a fibre bundle over $S_o$ with fibre $\beta F$ and $S_o'$ embeds in $E(S_o,\beta F)$ as an open dense subbundle. 

If $K\subset S_o$ is a compact set and $K':=r^{-1}(K)$,  $K_E:=r_E^{-1}(K)$, then $K'$ is dense in $K_E$ and a bounded continuous function on $K'$ admits a continuous extension to the compact set $K_E$ if and only if it is uniformly continuous with respect to the path metric induced by a Riemannian metric pulled back by $r$ from $S_o$.  In particular, this is valid for restrictions to $K'$ of bounded holomorphic functions defined on the preimage by $r$ of a neighbourhood of $K$. This implies that
each function in $H^\infty(S')$ extends continuously to $S_E$. 
Moreover, the algebra of such extensions separates the points of $S_E$ so that there is an injective continuous map  of $S_E$  into $\mathfrak M(H^\infty(S'))$.
In what follows, we identify $S_E$ with its image under the embedding. Then $S_E$ is a dense subset of
$\mathfrak M(H^\infty(S'))$. Similarly, we regard $E(S_o,\beta F)$ as a dense subset of $\mathfrak M(H^\infty(S_o'))$.
\subsect{Proof of Theorem \ref{appen}}
We retain notations of the previous section.

Let  $\hat r:\mathfrak M(H^\infty(S'))\rightarrow\bar S$ be the continuous surjective map induced by 
the transpose of the homomorphism
$H^\infty(S_o)\rightarrow H^\infty(S')$, $f\mapsto f\circ r|_{S'}$. By definition,  $\hat r|_{S'}=r$.
\begin{Lm}\label{lem4.1}
The set $\mathfrak M(H^\infty(S'))\setminus \hat r^{-1}(\partial S)$ coincides with $S_E\, (:=r^{-1}(S))$. 
\end{Lm}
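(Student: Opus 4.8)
The plan is to prove the stated set equality by two inclusions, after first rewriting the left-hand side. Since $\hat r$ is surjective onto $\bar S$ and $\bar S=S\sqcup\partial S$, one has $\mathfrak M(H^\infty(S'))\setminus\hat r^{-1}(\partial S)=\hat r^{-1}(S)$, so it suffices to prove $\hat r^{-1}(S)=S_E$, where I view $S_E=r_E^{-1}(S)$ as the dense subset of $\mathfrak M(H^\infty(S'))$ described in the auxiliary section.

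The inclusion $S_E\subseteq\hat r^{-1}(S)$ is immediate. For $g\in H^\infty(S_o)$ the pullback $g\circ r|_{S'}$ is constant along the fibres of $r$, hence its continuous extension to $E(S_o,\beta F)$ factors through $r_E$; evaluating at $\zeta\in S_E$ therefore gives $g(r_E(\zeta))$. As $\hat r$ is induced by $g\mapsto g\circ r|_{S'}$, this says exactly $\hat r|_{S_E}=r_E$, so $\hat r(S_E)=r_E(S_E)=S$ and hence $S_E\subseteq\hat r^{-1}(S)$.

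The reverse inclusion $\hat r^{-1}(S)\subseteq S_E$ carries the content of the lemma, and I would argue by localizing with the help of the corona theorem. Fix $\xi\in\mathfrak M(H^\infty(S'))$ with $p:=\hat r(\xi)\in S$. By part (a), $S'$ is dense in $\mathfrak M(H^\infty(S'))$, so there is a net $(x_\lambda)\subset S'$ with $\delta_{x_\lambda}\to\xi$ in the Gelfand topology; continuity of $\hat r$ together with $\hat r|_{S'}=r$ gives $r(x_\lambda)\to p$. Choosing a relatively compact, evenly covered coordinate neighbourhood $W$ with $p\in W$ and $\bar W\subset S$, one has $r^{-1}(W)=\bigsqcup_\alpha W_\alpha$ and, for all large $\lambda$, $x_\lambda\in r^{-1}(W)\subseteq r^{-1}(\bar W)$. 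The restriction $r_E^{-1}(\bar W)$ of the bundle $E(S_o,\beta F)$ over the compact set $\bar W$ is compact, being a fibre bundle with compact fibre $\beta F$ over a compact base, and by the auxiliary results every $f\in H^\infty(S')$ restricts to a function on $r^{-1}(\bar W)$ that is uniformly continuous with respect to the path metric, hence extends continuously to $r_E^{-1}(\bar W)$. This yields a continuous map $r_E^{-1}(\bar W)\to\mathfrak M(H^\infty(S'))$ agreeing with $x\mapsto\delta_x$ on $r^{-1}(\bar W)$.

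To finish, I would pass to a subnet with $x_\lambda\to\zeta$ in the compact space $r_E^{-1}(\bar W)$; by continuity of the above map the images $\delta_{x_\lambda}$ converge to the image of $\zeta$, which by Hausdorffness of $\mathfrak M(H^\infty(S'))$ must equal $\xi$. Finally $r_E(\zeta)=\lim r(x_\lambda)=p\in W$, so $\zeta\in r_E^{-1}(W)=W_E\subseteq S_E$; since on $W_E$ the local extension coincides with the global embedding of $S_E$ (both being the unique continuous extension from the dense set $r^{-1}(W)$), the image of $\zeta$ is a point of $S_E$, and hence $\xi\in S_E$. The main obstacle is precisely this last transfer step: one must reconcile the abstract Gelfand limit $\xi$, produced by the corona theorem, with the concrete limit $\zeta$ taken inside the local compact model $r_E^{-1}(\bar W)$. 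This is what forces the use of the local triviality of the covering (to trap the approximating net inside a single evenly covered piece), the compatibility of the two embeddings into $\mathfrak M(H^\infty(S'))$, and the relative compactness $\bar W\subset S$ together with continuity of $\hat r$ (to guarantee $r_E(\zeta)=p$ lies over the interior point, not over $\partial W$), so that $\zeta$ belongs to $S_E$ rather than to a boundary fibre.
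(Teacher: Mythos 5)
Your proposal is correct and follows essentially the same route as the paper: take a net in $S'$ converging to the given point, use $\hat r(\xi)\in S$ to trap the net over a compact neighbourhood $\bar W\subset S$, and exploit compactness of $r_E^{-1}(\bar W)$ in the bundle topology together with continuity of its inclusion into $\mathfrak M(H^\infty(S'))$ to conclude the limit lies in $S_E$. The only cosmetic differences are that you spell out the easy inclusion $S_E\subseteq\hat r^{-1}(S)$ and pass to a convergent subnet plus Hausdorffness, where the paper simply notes that the continuous image of the compact set is compact (hence closed) and therefore contains the limit; your requirement that $W$ be evenly covered is harmless but not needed.
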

\begin{proof}
Let $x\in \mathfrak M(H^\infty(S'))\setminus \hat r^{-1}(\partial S)$ and $\{x_\alpha\}\subset S'$ be a net converging to $x$. Since $\hat r(x)\in S$ and $\hat r(x)=\lim_\alpha r(x_\alpha)$, passing to a subnet, if necessary, without loss of generality we may assume that all points $r(x_\alpha)$ belong to a neighbourhood $U$ of $x$ such that $\bar U\subset S$. Then by the definition of $E(S_o,\beta F)$, $r_E^{-1}(\bar U)$ is a compact subset of $S_E$ in the original topology of $E(S_o,\beta F)$ containing all points $x_\alpha$.
Since the image of a compact set under a continuous map is compact, $r_E^{-1}(\bar U)$ as a subset of  $\mathfrak M(H^\infty(S'))$ is compact in the Gelfand topology. This implies that $x=\lim_\alpha x_\alpha\in r_E^{-1}(\bar U)\subset 
S_E$, as required.
\end{proof}
The lemma implies that $S_E$ is an open dense subspace of $\mathfrak M(H^\infty(S'))$ and 
\[
\mathfrak M(H^\infty(S'))=S_E\sqcup \hat r^{-1}(\partial S).
\]
Then, since ${\rm dim}\, S_E=2$ and for each compact subset $K\subset S_E$ the restriction of the Gelfand topology of $\mathfrak M(H^\infty(S'))$ to $K$ coincides with the topology induced from $E(S_o,\beta F)$, ${\rm dim}\, K\le 2$, where dimension is defined by open covers in the Gelfand topology. Thus, to show that ${\rm dim}\,\mathfrak M(H^\infty(S'))=2$ it suffices to show that ${\rm dim}\, \hat r^{-1}(\partial S)\le 2$, see \cite[Ch.2,\,Thm.9-11]{N}.
To this end, we prove that $\hat r^{-1}(\partial S)$ can be covered by finitely many compact subsets homeomorphic to subsets of $\mathfrak M(H^\infty(\Di\times\N))$. This and Theorem \ref{te2.1}(a) will imply the required statement.\smallskip

Let $p:\mathfrak M(H^\infty(S'))\rightarrow\mathfrak M(H^\infty(S_{o}'))$
be the continuous map induced by the
transpose of the homomorphism $H^\infty(S_o')\rightarrow H^\infty(S')$, $f\mapsto f|_{S'}$. Then  $p|_{S_E}={\rm id}$, 
the image of $p$ is $\bar S_E:=r_E^{-1}(\bar S)\, (\subset E(S_o,\beta F))$ and
\begin{equation}\label{e4.2}
\hat r=r_E\circ p.
\end{equation}
Since $\bar S_E$ is a compact subset of $E(S_o,\beta F)$ in the original topology of $E(S_o,\beta F)$, it is a compact subset of $\mathfrak M(H^\infty(S_{o}'))$ as well.
Therefore 

\noindent (*) {\em if $f$ is a bounded continuous function on $S'$ which extends to a function $\tilde f$ on $\bar S_E$ continuous in the bundle topology of $E(S_o,\beta F)$, then  $p^*\tilde f$ is an extension of $f\, (=p^*f)$ to $\mathfrak M(H^\infty(S'))$ continuous in the Gelfand topology.}\smallskip

 \noindent This fact will be used  in the proof.

Let us proceed with the proof of the statement ${\rm dim}\, \hat r^{-1}(\partial S)=2$.
To this end, we choose some open disks $\widetilde D_i$ containing $D_i$ such that each
$A_i:=\widetilde D_i\setminus D_i\subset S$ is biholomorphic up to the boundary to an annulus $A:=\{z\in\Co\, :\, c<|z|<1\}$ with $\partial D_i$ homeomorphic to the outer boundary circle, see \eqref{eq4.1}. We set
\[
A_i':=r^{-1}(A_i),\quad 1\le i\le k.
\]
By definition, each connected component of $A_i'$ is biholomorphic either to an annulus or $\Di$. (If the covering is not regular, then these components are not necessarily biholomorphic.) We cover $A_i$ by two open sets $A_{i1}$, $A_{i2}$ biholomorphic (by the above chosen biholomorphism of $A_i$ and $A$) to 
\[
A_{1}=\left\{z:=re^{i\theta}\in A\, :\, -\pi<\theta<\frac{\pi}{2}\right\}\quad {\rm and}\quad  A_2=\left\{z:=re^{i\theta}\in A\, :\, 0<\theta<\frac{3\pi}{2}\right\},
\] 
respectively, and set 
\[
A_{ij}':=r^{-1}(A_{ij}).
\]
By definition, each connected component of $A_{ij}'$ is biholomorphic by means of $r$ to $A_{ij}$; thus, $A_{ij}'$ is biholomorphic to $A_{ij}\times F$.
By $\partial_o A_{ij}:=\overline{A}_{ij}\cap\partial D_i$ we denote the part of the `outer' boundary of $A_{ij}$.
\begin{Prop}\label{prop4.2}
Every holomorphic function from $H^\infty(A_{ij}')$ can be continuously extended to $\hat r^{-1}(A_{ij}\cup \partial_o A_{ij})$.
\end{Prop}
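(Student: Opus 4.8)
The plan is to reduce to a product over the fibre and then extend fibrewise with Su\'arez's local extension theorem, keeping the estimates uniform in the fibre variable. Since each sector $A_{ij}$ is simply connected, the covering $r\colon A_{ij}'\to A_{ij}$ is trivial, and indeed the construction has already recorded $A_{ij}'\cong A_{ij}\times F$. Fixing a conformal equivalence $A_{ij}\cong\Di$ (which extends across the analytic arc $\partial_o A_{ij}$ by the reflection principle and carries $\partial_o A_{ij}$ onto an open arc $\gamma\subset\mathbb S$), I identify $H^\infty(A_{ij}')$ with the algebra of norm-bounded families $\{f_\alpha\}_{\alpha\in F}\subset H^\infty(\Di)$. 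The remaining boundary of $A_{ij}$ --- the two radial cuts and the inner arc --- lies in the interior of $S$, so by Lemma \ref{lem4.1} the set $\hat r^{-1}(A_{ij}\cup\partial_o A_{ij})$ splits into the interior piece $\hat r^{-1}(A_{ij})\subset S_E$, which the trivialization identifies with $A_{ij}\times\beta F$, and the corona fibre $\hat r^{-1}(\partial_o A_{ij})$ over the arc on $\partial S$.

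Over the interior piece the extension is soft. A given $f\in H^\infty(A_{ij}')$ is a uniformly bounded family, hence equicontinuous on compact subsets of $A_{ij}$ by the Cauchy estimates, so $(z,\alpha)\mapsto f_\alpha(z)$ extends jointly continuously to $A_{ij}\times\beta F$: for fixed $z$ the bounded net $\alpha\mapsto f_\alpha(z)$ extends to $\beta F$, and equicontinuity promotes this to joint continuity. Since on $S_E$ the Gelfand topology of $\mathfrak M(H^\infty(S'))$ agrees with the bundle topology of $E(S_o,\beta F)$, this already yields a continuous extension of $f$ to $\hat r^{-1}(A_{ij})$.

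The substantive step is the corona fibre $\hat r^{-1}(\partial_o A_{ij})$. Here I would use that $S'$ is dense in $\mathfrak M(H^\infty(S'))$ (part (a)): a point $\xi\in\hat r^{-1}(\partial_o A_{ij})$ is the limit of a net $\{x_\beta\}\subset A_{ij}'$ whose base points $r(x_\beta)$ approach an interior point of the arc, and on the sheet through $x_\beta$ the value $f(x_\beta)$ equals $f_{\alpha_\beta}(w_\beta)$ with $w_\beta$ tending to a point of $\gamma$. Su\'arez's theorem \cite[Thm.\,3.2]{S}, applied exactly as in the proof of Theorem \ref{teo3.1}, guarantees that each $f_\alpha$ extends continuously to an open subset of $\mathfrak M(H^\infty)$ containing the corona fibre over $\gamma$, so along any single sheet the limit $\lim_\beta f(x_\beta)$ exists. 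Making this extension uniform over $\alpha\in F$ (using the common norm bound together with the quantitative nature of Su\'arez's estimates) shows that $\lim_\beta f(x_\beta)$ exists and is independent of the chosen net, which is precisely continuity of the extension at $\xi$ and its compatibility with the interior piece. Pulling the resulting function back through $\hat r=r_E\circ p$ then produces a single continuous function on all of $\hat r^{-1}(A_{ij}\cup\partial_o A_{ij})$.

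The hard part will be this last uniformity. When the fibre $F$ is infinite the corona direction over $\gamma$ and the Stone-\v{C}ech remainder $\beta F\setminus F$ degenerate simultaneously, so one must exclude oscillation of the family $\{f_\alpha\}$ that is invisible on each individual sheet yet obstructs joint continuity; this is exactly the point where the quantitative corona and extension estimates of \cite{S}, rather than a soft Gelfand-transform argument, are indispensable.
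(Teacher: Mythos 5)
Your reduction to a trivialized product $A_{ij}'\cong A_{ij}\times F$ and your treatment of the interior piece $\hat r^{-1}(A_{ij})\cong A_{ij}\times\beta F$ (uniform bound $\Rightarrow$ equicontinuity on compacts $\Rightarrow$ joint continuity into $\beta F$) are fine and consistent with the paper's setup. But the step you yourself flag as ``the hard part'' --- continuity at points of $\hat r^{-1}(\partial_o A_{ij})$ --- is the entire content of the proposition, and the mechanism you propose for it does not exist. Su\'arez's Theorem 3.2 in \cite{S} is a soft, single-domain statement: it extends one $f\in H^\infty(U)$ to the closure of $U$ in $\mathfrak M(H^\infty)$, and it carries no modulus-of-continuity estimate that could be made ``uniform over $\alpha\in F$'' (nor is it clear what such uniformity would even mean on the non-metrizable corona). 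Sheet-by-sheet extension genuinely fails to control nets $x_\beta=(w_\beta,\alpha_\beta)$ in which $w_\beta$ approaches the arc while $\alpha_\beta$ escapes to $\beta F\setminus F$: for an arbitrary bounded family such as $f_\alpha(w)=w^{n_\alpha}$ with $n_\alpha\to\infty$, each sheet extends but the diagonal limits depend on the net. There is also a circularity lurking in your plan: to convert uniform sheetwise data into continuity on $\hat r^{-1}(\partial_o A_{ij})$ you would need to know how that fibre sits relative to $\mathfrak M(H^\infty(A_{ij}'))$, which is Corollary \ref{cor4.4} --- a consequence of the proposition you are proving.

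The paper's proof overcomes exactly this simultaneous degeneration by a different device: a Banach-valued $\overline\partial$-decomposition. Using \cite[Thm.\,1.3]{Br3} it lifts $f$ to $F\in H^\infty(A_{ij}\times S_o')$, multiplies by a cutoff $\rho$ equal to $1$ on a closed sector $Z_\rho$ touching the arc, and corrects by the Cauchy--Green integral \eqref{eq4.4} (and, in the intermediate Lemma \ref{lem4.3}, by solving $\overline\partial G=\omega$ on $S_o$ via \cite{Bu}). This writes $f=h_1+h$ near the arc, where $h_1$ lies in the larger global algebra $H^\infty(A_i')$ (ultimately $H^\infty(S')$, whose elements extend to $\mathfrak M(H^\infty(S'))$ tautologically by the Gelfand transform) and $h$ is uniformly continuous for the pulled-back path metric, hence extends through the fibrewise Stone--\v{C}ech compactification $E(S_o,\beta F)$ by fact (*). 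The oscillation that defeats your sheetwise argument is absorbed into $h_1$, where continuity on the spectrum is automatic; the remainder $h$ is smooth up to the boundary, where $\beta F$ handles the fibre degeneration. Without some decomposition of this kind (or an equivalent substitute), your outline cannot be completed.
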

Note that $A_{ij}'$ is dense in $\hat r^{-1}(A_{ij}\cup  \partial_o A_{ij})$ by the corona theorem for $H^\infty(S')$.
\begin{proof}
First, we prove the following result.
\begin{Lm}\label{lem4.3}
Each $f\in H^\infty(A_{i}')$ admits an extension $\tilde f\in C(\hat r^{-1}(A_{i}\cup \partial D_i))$.
\end{Lm}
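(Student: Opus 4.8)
The plan is to separate the extension over the open annulus from the extension across the boundary circle $\partial D_i$, and to reduce the genuinely hard part, the latter, to a statement about $\mathfrak M(H^\infty(\Di))$ via the universal covering of $A_i$, exactly as in the proof of Theorem \ref{teo3.1}. First I would dispose of the interior. Since $A_i\subset S$ is an open annulus, every annulus $A_i^\delta\Subset A_i$ lies at a positive distance from $\partial S$, so by the Cauchy estimates $f$ has bounded derivative on $A_i^\delta$ and is therefore uniformly continuous on $r^{-1}(A_i^\delta)$ with respect to the path metric pulled back from $S_o$; by the auxiliary results it extends continuously to $r_E^{-1}(A_i^\delta)$. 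Letting $\delta\to 0$ and using that, by Lemma \ref{lem4.1}, $\hat r^{-1}(A_i)\subset S_E$ and $\hat r=r_E\circ p$ with $p={\rm id}$ on $S_E$, these compatible extensions yield a continuous extension of $f$ over $\hat r^{-1}(A_i)$. The whole difficulty is thus concentrated on the fibre $\hat r^{-1}(\partial D_i)$, which lies in the corona and consists of homomorphisms rather than ordinary boundary points; note that $\partial D_i$ is a one-sided boundary for $f$ (interior to $S_o$ but approached only from $A_i'$), so a bounded holomorphic function on $A_i'$ need not extend continuously to the actual circle, and this step genuinely requires a Su\'arez-type argument.

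For the boundary fibre I would fix the biholomorphism $A_i\cong A:=\{c<|z|<1\}$ with $\partial D_i\leftrightarrow\{|z|=1\}$ and pass to the universal covering $\pi:\Di\to A$ (the strip model of the annulus), with deck group $\Gamma\cong\Z$. Each connected component $C$ of $A_i'=r^{-1}(A_i)$ is the quotient of $\Di$ by a subgroup $H_C\le\Gamma$, hence biholomorphic to an annulus (finite cyclic cover) or to $\Di$ itself (the strip), and $f|_C$ pulls back to an $H_C$-invariant function $\hat f_C\in H^\infty(\Di)$. Writing $\rho:=z\circ\pi\in H^\infty(\Di)$, so that $|\rho|<1$ and $|\rho|\to 1$ precisely along the boundary line over $\{|z|=1\}$, the collar of $\partial D_i$ corresponds to a region of the form $\{\xi\in\mathfrak M(H^\infty(\Di)):|\hat\rho(\xi)|>1-\delta\}$. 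By \cite[Thm.\,3.2]{S} every bounded holomorphic function on the preimage of this collar extends continuously to the corresponding open subset of $\mathfrak M(H^\infty(\Di))$, in the same way that the functions in $H^\infty(U_1)$ were extended to $\widetilde U_1=\{\hat h<\frac34\}$ in the proof of Theorem \ref{teo3.1}. Applying this to each $\hat f_C$ produces a continuous boundary extension at the level of the universal cover.

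The last and hardest step is the descent and gluing. I would push the universal-cover extensions down to the individual components $C$ (using $H_C$-invariance) and then assemble them, together with the $\beta F$-bundle structure of $E(S_o,\beta F)$ and property (*), into a single function that is continuous on the whole fibre $\hat r^{-1}(\partial D_i)$ and agrees there with the interior extension over $\hat r^{-1}(A_i)$. The main obstacle will be uniformity: there are infinitely many components $C$, the discrete fibre $F$ is infinite, and the boundary coronas of distinct components accumulate, so one must guarantee that all these extensions match continuously across $\hat r^{-1}(\partial D_i)$ simultaneously and at the points where distinct components pile up. This is exactly where the uniform estimates underlying \cite[Thm.\,3.2]{S}, together with the description of $\mathfrak M(H^\infty(\Di\times\N))$ provided by Theorem \ref{te2.1} and Theorem \ref{teo3.1}, are needed, so that the two extensions over $\hat r^{-1}(A_i)$ and over $\hat r^{-1}(\partial D_i)$ glue into the desired $\widetilde f\in C(\hat r^{-1}(A_i\cup\partial D_i))$.
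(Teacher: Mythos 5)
Your first step (extending over $\hat r^{-1}(A_i)\subset S_E$ via Cauchy estimates and uniform continuity with respect to the lifted path metric) is fine and agrees with the easy half of the argument. But the plan for the fibre $\hat r^{-1}(\partial D_i)$ has a genuine gap, and it is exactly the one you flag yourself in the last paragraph without closing it. A point $\xi\in\hat r^{-1}(\partial D_i)$ lies in the corona of $\mathfrak M(H^\infty(S'))$, and its neighbourhood basis there is determined by Gelfand transforms of \emph{global} functions in $H^\infty(S')$; a net $\{x_\alpha\}\subset A_i'$ converging to $\xi$ may wander through infinitely many distinct components $C$ of $A_i'$. Your construction produces, for each single component $C$, an extension of $f|_C$ to a piece of $\mathfrak M(H^\infty(\Di))$ (after descending from the universal cover), but it gives no mechanism forcing $\lim_\alpha f(x_\alpha)$ to exist, or to be independent of the net, when the $x_\alpha$ jump between components. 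The tools you invoke for this do not apply: Theorem \ref{te2.1} and the structure of $\mathfrak M(H^\infty(\Di\times\N))$ are relevant to $A_{ij}'$, whose components are simply connected so that $A_{ij}'\cong\Di\times F$ (this is why they appear in Proposition \ref{prop4.2}), whereas $A_i'$ is a disjoint union of annuli and disks that need not even be mutually biholomorphic when the covering is irregular. Moreover, pushing a per-component extension forward into $\mathfrak M(H^\infty(S'))$ requires knowing that the canonical map $\mathfrak M(H^\infty(A_i'))\rightarrow \hat r^{-1}(\overline{A}_i)$ is injective over $A_i\cup\partial D_i$, and that injectivity (cf.\ Corollary \ref{cor4.4}) is itself proved \emph{from} the extension property, so your route is circular at this point.

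The paper closes precisely this gap by a different mechanism, which is the actual content of the lemma: using \cite[Thm.\,1.3]{Br3} one writes $f=e^*F$ with $F\in H^\infty(A_i\times S_o')$ viewed as an $H^\infty(S_o')$-valued holomorphic function on $A_i$, and then a cutoff near $\partial D_i$ together with Bungart's Banach-valued $\overline\partial$-theory \cite{Bu} on $S_o$ yields the decomposition $f=g_1+g$ on a neighbourhood $N'$ of $r^{-1}(\partial D_i)$, see \eqref{e4.4}, where $g_1\in H^\infty(S')$ extends to all of $\mathfrak M(H^\infty(S'))$ by the Gelfand transform and $g$ is bounded and uniformly continuous in the path metric, hence extends by (*). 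Each summand has a well-defined limit along \emph{every} net converging to a point of $\hat r^{-1}(\partial D_i)$, including nets running through infinitely many components, and density of $N'$ in $\hat r^{-1}(N)$ glues this with the interior extension. Some device of this kind --- exhibiting $f$ near $\partial D_i$ as built from globally defined and uniformly continuous pieces --- is indispensable, and it is the ingredient your proposal is missing.
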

\begin{proof}
To avoid technicalities we use the convenient language of \cite[Sect.\,3.3]{Br3}.

Let $X:=S_o\times S_{o}'$. We embed $S_{o}'$ into $X$ by the formula
\begin{equation}\label{eq4.2}
e(z):=(r(z), z),\quad z\in S_{o}'.
\end{equation}
Then $e(S_{o}')$ is a closed submanifold of the two-dimensional Stein manifold $X$. 
As follows from \cite[Thm.\,1.3]{Br3} for each  $f\in H^\infty(A_{i}')$ there is some $F\in H^\infty(A_{i}\times S_{o}')$ such that 
\[
e^*F=f.
\]
Such $F$ can be regarded as a $H^\infty$ function on $A_{i}$ with values in the Banach space $H^\infty(S_{o}')$. 

Let $\rho$ be a $C^\infty$ function on $\bar S$ equal to $1$ in a neighbourhood of $\partial D_i$ with support in $A_i$. Then $\overline\partial(\rho F)$ is a
$H^\infty(S_o')$-valued $C^\infty$ $(0,1)$-form on $\bar S$ equals 0 in a neighbourhood of $\partial D_i$. Hence, it is extended by $0$ to a $(0,1)$-form $\omega$ on $S_o$.
According to the generalized H.\,Cartan theory, see \cite{Bu}, equation
\[
\overline\partial G=\omega
\]
is solvable on $S_o$. Its solution $G$ is a $C^\infty$ function on $S_o$ with values in $H^\infty(S_o')$ holomorphic (and bounded) in a neighbourhood of $\partial D_i$ such that
\[
G_1:=\rho F- G|_S\in H^\infty(S,H^\infty(S_o'))=H^\infty(S\times S_o').
\]
Thus, $F=G_1+G$ in a neighbourhood $N$ of $\partial D_i$ in $S$. Considering $G_1$ and $G$ as functions on $S_o\times S_o'$ and restricting them to $e(A_i')$ we obtain 
\begin{equation}\label{e4.4}
f=g_1+g\quad {\rm on}\quad N':=r^{-1}(N);
\end{equation}
here $g_1:=e^*G_1\in H^\infty(S')$ and $g:=e^*G$ is such that  $g|_{N'}\in H^\infty(N')$.  

By definition, $g_1$ admits a continuous extension to $\mathfrak M(H^\infty(S'))$ by the Gelfand transform. Also, $g$ being bounded uniformly continuous on $r^{-1}(\bar S)$ with respect to the path metric induced by a
Riemannian metric pulled back by $r$ from $S_o$, see \cite{Br3}, admits a continuous extension to $\bar S_E\subset\mathfrak M(H^\infty(S_o'))$ and,  hence, to $\mathfrak M(H^\infty(S'))$, see (*). This and \eqref{e4.4} imply that $f|_{N'}$ can be extended to a function  $\tilde f_1\in C(\hat r^{-1}(N\cup\partial D_i))$. On the other hand, $f$ admits a continuous extension to $S_E\subset \mathfrak M(H^\infty(S'))$ denoted by $\tilde f_2$, see Section~4.1.

We have 
\[
\tilde f_1|_{N'}-\tilde f_2|_{N'}=0.
\]
Since the open set $N'$ is dense in $\hat r^{-1}(N)$ by the corona theorem, the previous equation implies that $\tilde f_1=\tilde f_2$ on $\hat r^{-1}(N)$, i.e., these functions coincide on $\hat r^{-1}(N\cup\partial D_i)\cap S_E$ and so define a function $\tilde f\in C(\hat r^{-1}(S\cup\partial D_i))$ which extends $f$.

The proof of the lemma is complete.
\end{proof}

Now, let us complete the proof of the proposition.

As before, for $f\in H^\infty(A_{ij}')$ there is some $F\in H^\infty(A_{ij}\times S_{o}')$ regarded as a $H^\infty$ function on $A_{ij}$ with values in the Banach space $H^\infty(S_{o}')$ such that 
$e^*F=f$, see \cite[Thm.\,1.3]{Br3}. Let $\rho$ be a  $C^\infty$ function on $\bar A_i$ with support in $\overline{A}_{ij}$ equal to 1 on a subset $Z_\rho\subset \overline{A}_{ij}$ which is the intersection of a closed sector with the origin at $0$ with $\overline{A}_{ij}$ (here we identify $\bar A_i$ with $\bar A$ by the above chosen biholomorphism). In particular, $\rho=1$ on the closed arc $L_\rho:=Z_\rho\cap\partial_o A_{ij}$.
 Then $G:=\frac{\partial(\rho F)}{\partial\bar z}$ is a bounded
$H^\infty(S_o')$-valued $C^\infty$ function on $A_{i}\cup Z_\rho$ with support in $\overline{A}_{ij}$ equals 0 on $Z_\rho$.  

Next, under the identification of $A_i$ with $A$ 
\begin{equation}\label{eq4.4}
H(\xi)=\frac{1}{\pi}\iint_{A_{ij}}\,\frac{G(z)}{\xi-z}\,dx\,dy,\qquad \xi\in A_{i}\cup Z_\rho,\quad x={\rm Re}\, z,\quad y={\rm Im}\, z,
\end{equation}
is a bounded $H^\infty(S_o')$-valued $C^\infty$ function on $A_{i}\cup Z_\rho$  holomorphic on $Z_\rho$ satisfying
\[
\frac{\partial H}{\partial\bar \xi}=G.
\]
Hence,
\[
H_1:=\rho F- H|_{A_i}\in H^\infty(A_i,H^\infty(S_o'))=H^\infty(A_i\times S_o').
\]
In turn, $F=H_1+H$ on $A_{ij}\cap Z_\rho$. Considering $H_1$ and $H$ as functions on $A_i\times S_o'$ and $(A_i\cup Z_\rho)\times S_o'$ and restricting them to $e(A_{ij}')$ we obtain
\[
f=h_1+h\quad {\rm on}\quad A_{ij}'\cap Z_\rho',\quad Z_\rho':=r^{-1}(Z_\rho);
\]
here $h_1:=e^*H_1\in H^\infty(A_i')$ and $h:=e^*H\in C(A_i'\cup Z_\rho')$ is such that $h|_{Z_\rho'}$ is bounded uniformly continuous  with respect to the path metric induced by a Riemannian metric pulled back by $r$ from $S_o$. According to Lemma \ref{lem4.3}, $h_1$ admits a continuous extension to $\hat r^{-1}(A_i\cup\partial D_i)$. Also, $h|_{Z_\rho'}$ admits a continuous extension to $r_E^{-1}(Z_\rho)\subset\mathfrak M(H^\infty(S_o'))$ and, hence, to $\hat r^{-1}( Z_\rho)$, see (*). These imply that $f|_{A_{ij}'\cap Z_\rho'}$ admits a continuous extension to $\hat r^{-1}((A_{ij}\cap Z_\rho)\cup L_\rho)$. Since
\[
\bigcup_\rho\, Z_\rho=A_{ij},
\]
where $\rho$ runs over all possible functions satisfying the above described conditions, the latter implies that
$f$ admits an extension $\tilde f\in C(\hat r^{-1}(A_{ij}\cup \partial_o A_{ij}))$, as required.
 \end{proof}
 
 Let $q_j:\mathfrak M(H^\infty(A_{ij}'))\rightarrow \mathfrak M(H^\infty(S'))$ be the continuous maps transposed to the embeddings $H^\infty(S')\hookrightarrow H^\infty(A_{ij}')$ determined by restrictions to $A_{ij}'$, $j=1,2$. Due to the corona theorem for $H^\infty(S')$ the image of $q_j$ coincides with $\hat r^{-1}(\overline{A}_{ij})$.
 Since $A_{ij}'$ is biholomorphic to $\Di\times\N$, the corona theorem is valid for $H^\infty(A_{ij}')$, i.e., $A_{ij}'$ is dense in $\mathfrak M(H^\infty(A_{ij}'))$. 
 
 As a consequence of Proposition \ref{prop4.2} we obtain the following:
\begin{C}\label{cor4.4}
The restriction of $q_j$ to $(\hat r\circ q_j)^{-1}(A_{ij}\cup \partial_o A_{ij})$ is a bijection onto the set $\hat r^{-1}(A_{ij}\cup \partial_o A_{ij})$. 
\end{C}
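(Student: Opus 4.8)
The plan is to exhibit an explicit two-sided inverse to $q_j$ over the set $A_{ij}\cup\partial_o A_{ij}$, built entirely from the continuous extensions furnished by Proposition~\ref{prop4.2}. Throughout I would write $W:=\hat r^{-1}(A_{ij}\cup\partial_o A_{ij})\subset\mathfrak M(H^\infty(S'))$ and use the two density facts already recorded in the text: $A_{ij}'$ is dense in $W$ by the corona theorem for $H^\infty(S')$, and $A_{ij}'$ is dense in $\mathfrak M(H^\infty(A_{ij}'))$ since $A_{ij}'$ is biholomorphic to $\Di\times\N$.

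First I would establish surjectivity by constructing, for each $x\in W$, a preimage. By Proposition~\ref{prop4.2} every $f\in H^\infty(A_{ij}')$ has a continuous extension $\tilde f\in C(W)$, which is moreover unique because $A_{ij}'$ is dense in the Hausdorff space $W$. Define $\psi_x\colon H^\infty(A_{ij}')\to\Co$ by $\psi_x(f):=\tilde f(x)$. Linearity is immediate, and multiplicativity follows by continuity: on the dense set $A_{ij}'$ one has $\widetilde{fg}=fg=\tilde f\,\tilde g$, so $\widetilde{fg}=\tilde f\,\tilde g$ on all of $W$, whence $\psi_x(fg)=\psi_x(f)\psi_x(g)$; since $\psi_x(1)=1$ it is a nonzero character, i.e. $\psi_x\in\mathfrak M(H^\infty(A_{ij}'))$. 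To identify $q_j(\psi_x)$, note that for $g\in H^\infty(S')$ both the Gelfand transform $\hat g$ restricted to $W$ and the Proposition~\ref{prop4.2} extension of $g|_{A_{ij}'}$ are continuous on $W$ and agree with $g$ on the dense set $A_{ij}'$, hence coincide; therefore $q_j(\psi_x)(g)=\psi_x(g|_{A_{ij}'})=\hat g(x)=x(g)$, i.e. $q_j(\psi_x)=x$. Since $\hat r(q_j(\psi_x))=\hat r(x)\in A_{ij}\cup\partial_o A_{ij}$, the point $\psi_x$ lies in $(\hat r\circ q_j)^{-1}(A_{ij}\cup\partial_o A_{ij})$, so $q_j$ maps this set onto $W$.

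For injectivity I would show that every preimage is forced to equal $\psi_x$. Let $\phi\in(\hat r\circ q_j)^{-1}(A_{ij}\cup\partial_o A_{ij})$ and set $x:=q_j(\phi)\in W$. Choose a net $\{z_\alpha\}\subset A_{ij}'$ with $z_\alpha\to\phi$ in $\mathfrak M(H^\infty(A_{ij}'))$, so that $f(z_\alpha)\to\phi(f)$ for every $f$. Under $q_j$ each evaluation character at $z_\alpha\in A_{ij}'$ restricts to evaluation at $z_\alpha$ viewed as a point of $S'$, whence $q_j(z_\alpha)=z_\alpha$; continuity of $q_j$ gives $z_\alpha\to x$ in $\mathfrak M(H^\infty(S'))$. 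All $z_\alpha$ and $x$ lie in $W$, and $\tilde f\in C(W)$, so convergence in the subspace topology yields $\tilde f(z_\alpha)\to\tilde f(x)$; but $\tilde f|_{A_{ij}'}=f$, so $f(z_\alpha)\to\tilde f(x)$. Comparing the two limits gives $\phi(f)=\tilde f(x)=\psi_x(f)$ for all $f$, i.e. $\phi=\psi_x$. Thus any two preimages of $x$ coincide, so $q_j$ is injective on $(\hat r\circ q_j)^{-1}(A_{ij}\cup\partial_o A_{ij})$; together with surjectivity this is the asserted bijection onto $\hat r^{-1}(A_{ij}\cup\partial_o A_{ij})$.

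The real content of the statement is Proposition~\ref{prop4.2}, which is already in hand; what remains is a routine manipulation in character spaces. The only points demanding care are the two density-and-continuity identifications — that $f\mapsto\tilde f(x)$ is multiplicative, and that the Gelfand transform of $g\in H^\infty(S')$ agrees on $W$ with the extension of its restriction $g|_{A_{ij}'}$ — together with the bookkeeping that the net $z_\alpha\to x$ takes place within the subspace $W$ on which the extensions $\tilde f$ are continuous. I do not expect a genuine obstacle here, since both required density statements ($A_{ij}'$ dense in $W$ and in $\mathfrak M(H^\infty(A_{ij}'))$) are available from the corona theorems already invoked.
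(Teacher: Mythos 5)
Your proof is correct and follows essentially the same route as the paper: injectivity is obtained exactly as in the text, by comparing limits of nets from $A_{ij}'$ against the continuous extensions supplied by Proposition \ref{prop4.2} (the paper phrases this as a contradiction, you phrase it as identifying every fiber element with $\psi_x$, but the mechanism is identical). Surjectivity, which the paper simply reads off from the remark preceding the corollary that the image of $q_j$ equals $\hat r^{-1}(\overline{A}_{ij})$, you re-derive by constructing the character $\psi_x$ explicitly; this is a harmless elaboration, not a different method.
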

The result implies that $q_j$ maps a compact subset $K\subset (\hat r\circ q_j)^{-1}(A_{ij}\cup \partial_o A_{ij})$  bijectively onto the compact set $q_j(K)\subset \hat r^{-1}(A_{ij}\cup \partial_o A_{ij})$. Hence, $q_j|_K$ is a homeomorphism.
\begin{proof}
Since the mapping $q_j|_{(\hat r\circ q_j)^{-1}(A_{ij}\cup \partial_o A_{ij})}:(\hat r\circ q_j)^{-1}(A_{ij}\cup \partial_o A_{ij})\rightarrow \hat r^{-1}(A_{ij}\cup \partial_o A_{ij})$ is surjective, it remains to show that it is injective. 
 
 Suppose that  points $x_1,x_2\in (\hat r\circ q_j)^{-1}(A_{ij}\cup \partial_o A_{ij})$ are such that $q_j(x_1)=q_j(x_2)\in \hat r^{-1}(A_{ij}\cup \partial_o A_{ij})$. If $x_1\ne x_2$, then there is a function $f\in H^\infty(A_{ij}')$ whose Gelfand transform $\hat f\in C(\mathfrak M(H^\infty(A_{ij}')))$ satisfies $\hat f(x_1)\ne \hat f(x_2)$.  Let $\{x_{k\alpha}\}_{\alpha\in\Lambda}\subset A_{ij}'$ be nets converging to $x_k$ in $\mathfrak M(H^\infty(A_{ij}'))$, $k=1,2$.  Then, since $q_j|_{A_{ij}'}={\rm id}$, these nets converge to $q_j(x_k)$ in $\mathfrak M(H^\infty(S'))$, $k=1,2$.
 According to Proposition \ref{prop4.2}, $f$ admits an extension $\tilde f\in C(\hat r^{-1}(A_{ij}\cup \partial_o A_{ij}))$. Hence, due to the continuity of $\hat f$, $\tilde f$ and $q_j$ we obtain
 \[
 \begin{array}{r}
 \displaystyle
0\ne \hat f(x_2)-\hat f(x_1)=\lim_\alpha f(x_{2\alpha})-\lim_\alpha f(x_{1\alpha})=\lim_\alpha f(q_j(x_{2\alpha}))-\lim_\alpha f(q_j(x_{1\alpha}))\medskip\\
\displaystyle
=\tilde f(q_j(x_2))-\tilde f(q_j(x_1))=0,\qquad
\end{array}
 \]
 a contradiction proving the result.
 \end{proof}
 
 Let us complete the proof of the theorem. Recall that we require to show only that
${\rm dim}\,\hat r^{-1}(\partial S)\le 2$ (see the explanation after the proof of 
 Lemma \ref{lem4.1}).

By definition, $\hat r^{-1}(\partial S)=\sqcup_{i=1}^k\, \hat r^{-1}(\partial D_i)$ and each  $\hat r^{-1}(\partial D_i)$ is covered by two relatively open sets $\hat r^{-1}(\partial_o(A_{ij}))$, $j=1,2$. Due to Corollary \ref{cor4.4} and Theorem \ref{te2.1}(a) we have for each compact subset $K\subset \hat r^{-1}(\partial_o(A_{ij}))$ 
\[
{\rm dim}\,K={\rm dim}\,q_j^{-1}(K)\le {\rm dim}\,\mathfrak M(H^\infty(A_{ij}'))={\rm dim}\,\mathfrak M(H^\infty(\Di\times\N))=2.
\]
From here and \cite[Ch.2,\,Thm.9-11]{N} we obtain that ${\rm dim}\,\hat r^{-1}(\partial D_i)\le 2$ for all $i$. This implies that ${\rm dim}\,\hat r^{-1}(\partial S)\le \max_{1\le i\le k}\bigl\{ {\rm dim}\,\hat r^{-1}(\partial D_i)\bigr\}\le 2$, as required. 

Thus, we have proved that ${\rm dim}\,\mathfrak M(H^\infty(S'))\le 2$. In fact, since  $\mathfrak M(H^\infty(S'))$ contains the dense open subset $S'$ of dimension two,  ${\rm dim}\,\mathfrak M(H^\infty(S'))=2$ as well.

The proof of the theorem is complete.

\end{document}